\newcommand\restr[2]{{
  \left.\kern-\nulldelimiterspace 
  #1 
  \vphantom{\big|} 
  \right|_{#2} 
  }}
\def\BState{\State\hskip-\ALG@thistlm}
\numberwithin{equation}{section}
\newtheorem{theorem}{Theorem}[section]
\newtheorem{cor}[theorem]{Corollary}
\newtheorem{lem}[theorem]{Lemma}
\newtheorem{prop}[theorem]{Proposition}
\newtheorem{defn}[theorem]{Definition}
\newtheorem{rem}[theorem]{Remark}
\newcommand{\ds}{\displaystyle}
\title{On the spectral radius of block graphs with a given \\ dissociation number }
\author{Joyentanuj Das\footnote{Department of Applied Mathematics,
National Sun Yat-sen University, Gushan District, Kaohsiung City, 804, Taiwan (ROC) \indent  Email: joyentanuj@gmail.com,  joyentanuj@math.nsysu.edu.tw}  \quad and \quad Sumit Mohanty\footnote{Humanities and Applied Sciences, IIM Ranchi,  Prabandhan Nagar, Vill-Mudma, Nayasarai Road, Ranchi, Jharkhand-834003, India. \indent   Email:  sumitmath@gmail.com, sumit.mohanty@iimranchi.ac.in}}
\date{}
\begin{document}

\maketitle

\begin{abstract}
A connected graph is called a block graph if each of its blocks is a complete graph. Let $\mathbf{Bl}(\textbf{k}, \varphi)$  be the class of block graph on $\textbf{k}$ vertices with given dissociation number $\varphi$. In this article, we have obtained a block graph $\mathbb{B}_{\textbf{k},\varphi}$ in  $\mathbf{Bl}(\textbf{k}, \varphi)$ that uniquely attains the maximum spectral radius $\rho(G)$ among all graphs $G$ in  $\mathbf{Bl}(\textbf{k}, \varphi)$. Furthermore, we also provide bounds on  $\rho(\mathbb{B}_{\textbf{k},\varphi})$.
\end{abstract}

\noindent {\sc\textsl{Keywords}:} complete graphs, block graphs, dissociation number, spectral radius, bounds.

\noindent {\textbf{MSC}:}   05C50, 15A18

\section{Introduction}\label{sec:intro}
Let $G=(V(G),E(G))$  be a finite, simple, connected graph with $V(G)$ as the set of vertices and $E(G)$ as the set of edges in $G$.  We write $u\sim v$ to indicate that the vertices $u,v \in V(G)$ are adjacent in $G$.   The degree of the vertex $v$, denoted by $d_G(v)$, equals the number of vertices in $V$ that are adjacent to $v$.   A graph $H$ is said to be a subgraph of $G$ if $V(H) \subset V(G)$ and $E(H) \subset E(G)$. For any subset $S \subset V (G)$, a subgraph $H$ of $G$ is said to be an induced subgraph with vertex set $S$, if $H$ is a maximal subgraph of $G$ with vertex set $V(H)=S$. We write $|S|$ to denote the cardinality of the set $S$. 

Let $G=(V(G),E(G))$ be a graph. For $u,v \in V$, the adjacency matrix of the graph  $G$ is,    $\mathbf{A}(G) = [a_{uv}]$, where $a_{uv}= 1 $  if $u\sim v$ and $0$ otherwise.  For any column vector $\mathbf{x}$, if $x_u$  represent the entry of $\mathbf{x}$ corresponding to vertex $u\in V$, then 
$$\mathbf{x}^T \mathbf{A}(G)\mathbf{x}=  2\sum_{u\sim w}x_{u}x_{w}, $$
where $\mathbf{x}^T$ represents  the transpose of  $\mathbf{x}$. 


For a connected graph $G$    on $n \geq 2$ vertices, by the Perron-Frobenius theorem, the spectral radius  $\rho(G)$ of $\mathbf{A}(G)$ is a simple positive eigenvalue and the associated eigenvector is entry-wise positive (for details see~\cite{Bapat}). We will refer to such an eigenvector as a Perron vector of $G$.  Now we state a few known results on spectral radius useful in our subsequent calculations. By the Min-max theorem, we have
\begin{equation}\label{eqn:eq_sp_rd}
\rho(G) = \max_{\mathbf{x}\neq 0}\dfrac{\mathbf{x}^T \mathbf{A}(G)\mathbf{x}}{\mathbf{x}^T \mathbf{x}}= \max_{\mathbf{x}\neq 0}\dfrac{2\sum_{u\sim w}x_{u}x_{w}}{\sum_{u\in V}x_{u}^2}.
\end{equation}
Furthermore, in Eqn.~\eqref{eqn:eq_sp_rd}, the maximum attained if and only if $\mathbf{x}$ is a Perron vector of $G.$

Given a graph $G=(V(G),E(G))$, for $u, v \in V(G)$ we will use  $G + uv$ to denote the graphs obtained from $G$  by adding an edge $uv \notin E(G)$ and we have the following result.
\begin{lem}\label{lem:sr_edge}\cite{Bapat, Li}
	If $G$ is a graph such that for $u, v \in V(G)$, $uv \notin E(G)$, then $\rho(G) < \rho(G+uv)$. 
\end{lem}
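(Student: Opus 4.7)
The plan is to exploit the Rayleigh quotient characterization of the spectral radius given in Eqn.~\eqref{eqn:eq_sp_rd} together with the positivity of the Perron vector. Since the paper's setting concerns connected block graphs, I may assume $G$ is connected (and hence so is $G+uv$), and therefore by the Perron--Frobenius theorem the Perron vector of $G$ exists and is entry-wise positive.

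Let $\mathbf{x}$ denote the Perron vector of $G$, so that $\mathbf{A}(G)\mathbf{x}=\rho(G)\mathbf{x}$ and $x_w>0$ for every $w\in V(G)$. I would then plug this particular vector into the Rayleigh quotient for the larger graph $G+uv$. Because $\mathbf{A}(G+uv)$ differs from $\mathbf{A}(G)$ only in the two symmetric positions corresponding to the new edge $uv$, I get the identity
\[
\mathbf{x}^T \mathbf{A}(G+uv)\mathbf{x} \;=\; \mathbf{x}^T \mathbf{A}(G)\mathbf{x} + 2x_u x_v \;=\; \rho(G)\,\mathbf{x}^T\mathbf{x} + 2x_u x_v,
\]
where the second equality uses the eigenvalue equation for $\mathbf{x}$.

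Applying the variational formula~\eqref{eqn:eq_sp_rd} to $G+uv$ with the test vector $\mathbf{x}$ then yields
\[
\rho(G+uv) \;\geq\; \frac{\mathbf{x}^T \mathbf{A}(G+uv)\mathbf{x}}{\mathbf{x}^T\mathbf{x}} \;=\; \rho(G) + \frac{2x_u x_v}{\mathbf{x}^T\mathbf{x}}.
\]
Since every component of $\mathbf{x}$ is strictly positive, the correction term $2x_u x_v/(\mathbf{x}^T\mathbf{x})$ is strictly positive, which gives the strict inequality $\rho(G)<\rho(G+uv)$.

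There is no real obstacle here; the only subtlety is ensuring we have a \emph{strict} inequality, and this is precisely why I choose $\mathbf{x}$ to be the Perron vector of $G$ rather than, say, arguing from the Perron vector of $G+uv$: the strictness comes for free from $x_u x_v>0$, which in turn relies on the Perron--Frobenius conclusion that the Perron vector has no zero entries. (If one wanted to drop the standing connectedness assumption, the one point to check would be that the edge $uv$ lies in a component on which the Perron vector is positive, but within the block-graph context of this paper connectedness is given.)
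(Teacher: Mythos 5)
Your proof is correct and is the standard Rayleigh-quotient/Perron-vector argument; the paper itself states this lemma as a cited result from \cite{Bapat, Li} without proof, and your argument is essentially the one found in those sources. You are also right to flag connectedness: the strict inequality can genuinely fail for disconnected graphs (e.g.\ adding an edge inside a component of smaller spectral radius), so invoking the paper's standing assumption that all graphs are connected is the correct way to justify $x_u x_v>0$.
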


A vertex $v$ of a connected graph $G$ is a cut vertex of $G$ if $G - v$ is disconnected. A block of the graph $G$ is a maximal connected subgraph of G that has no cut-vertex.  A pendant block is a block whose deletion does not disconnect the graph.  Given two blocks  $F$ and $H$  of graph $G$ are said to be adjacent if they are connected via a cut-vertex. We denote $F\circledcirc H$, to represent the induced subgraph on the vertex set of two adjacent blocks $F$ and $H$.

Let $G$ be a graph.  The neighbours of a vertex $v$ in $G$ is the set of vertices that are adjacent to $v$ and denoted by $N_G(v)$. Similarly, given a cut vertex $v$ in $G$,  we denote $N_{G}^{\mathbf{Bl}}(v)$ is the set of blocks in $G$ that are adjacent to each other via the cut vertex $v$.

 A complete graph is a graph where each vertex is adjacent to every other vertex. A complete graph on $n$ vertices is denoted by $K_n$. A connected graph is called a block graph if each of its blocks is a complete graph.  Let $G$ be a block graph with $d_1$ blocks of $K_{n_1}$, $d_2$ blocks of $K_{n_2}$, so on up to $d_b$ blocks of $K_{n_b}$, then we write $G$  with blocks $ K_{n_1}^{(d_1)} - K_{n_2}^{(d_2)}- \cdots - K_{n_b}^{(d_b)}$ (for example see Figure~\ref{fig:block}). Here the above notation only gives information only about the blocks of $G$, but not about the structure of the graph. But for a special case, if $G$ has a central cut vertex, then all the blocks are adjacent via the central cut vertex and we denote it by
$$G = K_{n_1}^{(d_1)} \circledcirc K_{n_2}^{(d_2)}\circledcirc \cdots \circledcirc K_{n_b}^{(d_b)}.$$

\begin{figure}[ht]
	\centering
	\begin{tikzpicture}[scale=.6]
		\Vertex[size=.1,color=red]{A}
		\Vertex[y=2,size=.1,color=black]{B}
		\Vertex[x=2,y=0,size=.1,color=red]{C} 
		\Vertex[x=2,y=2,size=.1,color=black]{D}
		\Vertex[x=2,y=-2,size=.1,color=black]{E}
		\Vertex[x=4,y=0,size=.1,color=black]{F} 
		\Vertex[x=4,y=-2,size=.1,color=black]{G}
		
		\Vertex[x=3,y=2,size=.1,color=black]{D1}
		\Vertex[x=4,y=1,size=.1,color=black]{F1}

		\Vertex[x=-2,y=-.7,size=.1,color=black]{D2}
		\Vertex[x=-2,y=.7,size=.1,color=black]{F2}	
		
		\Vertex[x=-1,y=-2,size=.1,color=black]{D3}
		\Vertex[x=1,y=-2,size=.1,color=black]{E3}	
		
		\Edge[lw=2pt](A)(B)
		\Edge[lw=2pt](A)(C)
		\Edge[lw=2pt](A)(D)
		\Edge[lw=2pt](B)(C)
		\Edge[lw=2pt](B)(D)
		\Edge[lw=2pt](C)(D)
		
		\Edge[lw=2pt](C)(E)
		\Edge[lw=2pt](C)(F)
		\Edge[lw=2pt](C)(G)
		\Edge[lw=2pt](E)(F)
		\Edge[lw=2pt](E)(G)
		\Edge[lw=2pt](F)(G)
		
		\Edge[lw=2pt](C)(D1)
		\Edge[lw=2pt](C)(F1)
		\Edge[lw=2pt](D1)(F1)
		
		\Edge[lw=2pt](A)(D2)
		\Edge[lw=2pt](A)(F2)
		\Edge[lw=2pt](D2)(F2)
		
		\Edge[lw=2pt](A)(D3)
		\Edge[lw=2pt](A)(E3)
	\end{tikzpicture}
\caption{A block graph with blocks  $K_{1}^{(2)}-K_{3}^{(2)}-K_{4}^{(2)}$.} \label{fig:block}
\end{figure}
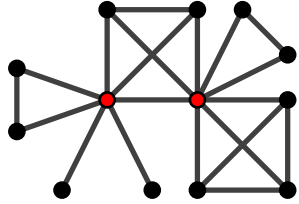

A vertex cover of a graph is a set of vertices that includes at least one endpoint of every edge of the graph. Given a graph $G$ and a positive integer $k$, a subset $S$ of the vertices of $G$ is a vertex $k$-path cover if, for every path on $k$ vertices in $G$, it contains a vertex from $S$. The minimum cardinality of a vertex $k$-path cover is denoted by $\psi_k(G)$. The graph invariant $\psi_k(G)$, was introduced in~\cite{Bresar,Bresar1} motivated by the problem of ensuring data integrity of communications in wireless sensor networks. One can see that $\psi_2(G)$ is the same as the minimum cardinality of a vertex cover of $G$; hence, the concept of vertex $k$-path cover is a generalization of vertex cover.

A set $\mathcal{I}$ of vertices in a graph $G$ is an independent set if no pair of vertices of $\mathcal{I}$ are adjacent. The independence number of $G$, denote by $\alpha(G)$, is the maximum cardinality of an independent set in $G$. We have a nice relation between $\psi_2(G)$ and $\alpha(G)$ as follows: $$\psi_2(G) + \alpha(G) = |V(G)|.$$

A set $\mathcal{D}$ of vertices in a graph $G$ is a dissociation set if the induced subgraph with vertex set $\mathcal{D}$ has maximum degree at most $1$. A maximum dissociation set of $G$ is a dissociation set with maximum cardinality. The dissociation number of $G$ is denoted by $\varphi(G)$ is the cardinality of a maximum dissociation set. Note that $\mathcal{I} \subset \mathcal{D}$. Clearly one can also observe that $\psi_3(G)$ and $\varphi(G)$ are related as follows: $$\psi_3(G) + \varphi(G) = |V(G)|.$$
We now present a few observations following directly from the above definitions.  Let $G$ be a block graph and $\mathcal{D}$ be a maximal dissociation set of $G$. Then, the following holds.
\begin{itemize}
\item $\varphi(G)=2$    if and only if  either $G$ is $K_n$ for some $n\geq 2$  or a path of length $2$.

\item If $B$ is a block in $G$, then $|B \cap \mathcal{D}|\leq 2.$ 

\item If $B$ is a pendant block in $G$, then $|B \cap \mathcal{D}|=2$.

\end{itemize}

The spectral radius of a graph  has been extensively studied subject to various graph-theoretic constraints. In spectral graph theory, the maximization and minimization of the spectral radius of a given class of a graph and the problem of determining the extremal graphs find a particular interest among researchers. One such result due to Brualdi and Solheid in~\cite{Brualdi} is an important motivation for our study. In this article, the authors obtained the graphs that maximize the spectral radius amongst graphs with a fixed number of vertices $\mathbf{k}$ in a given class of graphs. Later, several results in similar contexts has been published (for example see~\cite{Conde,Das, Feng,Guo,Chu, Lui,Lu, Lou, Xiao, Xu}). Obtaining bounds on the spectral radius of a graph is also an important direction in spectral graph theory, and for a few of the interesting results on bounds, readers may refer~\cite{KCD,Yuan}. For a few more interesting results on spectral graph theory,  readers also may refer~\cite{DS}.

In~\cite{Chu}, the authors found the graph that uniquely maximizes the spectral radius amongst the trees on a fixed number of vertices $\mathbf{k}$ with a given independence number. Subsequently, characterizing extremal graphs with a given independence number were studied for various classes of graphs (for details see~\cite{Conde, Das, Feng, Xu}). In particular, block graphs on a fixed number of vertices $\mathbf{k}$ and with a given independence number have been studied in~\cite{Conde}. Further, in~\cite{Das}, bi-block graphs (each of its blocks is a complete bipartite graph) have been studied, and later in~\cite{Lou}  the results of bi-block graphs were extended for general bipartite graphs.

From the definition of the dissociation number, it is clear that the dissociation number is an immediate generalization of the independence number. Several interesting results have been published related to dissociation number (for example see~\cite{Orlovich, Tu, Tu1}). Motivated by the characterization of extremal graphs concerning the spectral radius and independence number,  we have obtained a graph that uniquely maximizes the spectral radius amongst the class of block graphs with a fixed number of vertices $\mathbf{k}$ and a given dissociation number $\varphi$.  We now present our main result. 

Let $\mathbf{Bl}(\textbf{k}, \varphi)$ be the class of block graph on $\textbf{k}$ vertices with given dissociation number $\varphi$. 	 We now define a special block graph with a central cut vertex in $\mathbf{Bl}(\textbf{k}, \varphi)$. Let 
	\begin{align*}
		\mathbb{B}_{\textbf{k},\varphi} =
		\begin{cases}
			K_2 \circledcirc K_3^{(\frac{\varphi-3}{2})}\circledcirc K_{\textbf{k}-\varphi+2}, & \text{ if } \varphi \text{ is odd,}\\
			\\
						K_3^{(\frac{\varphi-2}{2})}\circledcirc K_{\textbf{k}-\varphi+2}, & \text{ if } \varphi \text{ is even.}
		\end{cases}
	\end{align*}

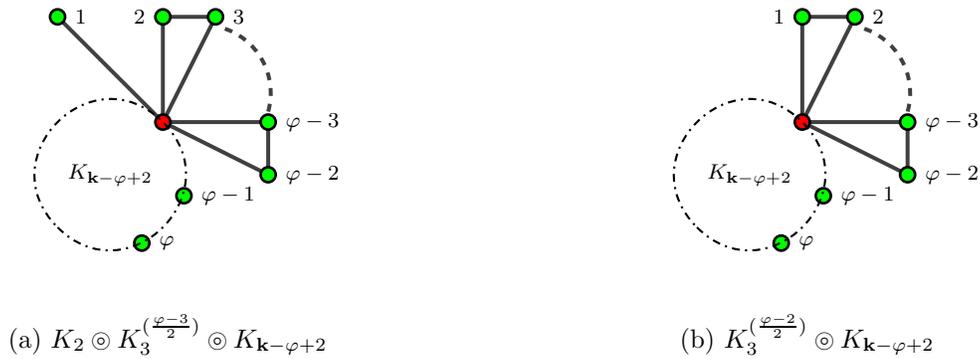
\begin{figure}[ht]
	\centering
	\begin{subfigure}[b]{0.5\linewidth}
		\centering
		\begin{tikzpicture}[scale=.7]
			\Vertex[x=-1,y=-1,label=$K_{\textbf{k}-\varphi+2}$,style={shape=coordinate}]{-1}
			\Vertex[x=-2,y=-2,style={shape=coordinate}]{-2}
			\Vertex[x=0,y=0,color=red,size=.2]{0}
			\Vertex[x=0,y=2,label=$2$,position=left,color=green,size=.2]{1}
			\Vertex[x=1,y=2,label=$3$,position=right,color=green,size=.2]{2}
			\Vertex[x=2,y=0,label=$\varphi-3$,position=right,color=green,size=.2]{3}
			\Vertex[x=2,y=-1,label=$\varphi-2$,position=right,color=green,size=.2]{4}
			\Vertex[x=-2,y=2,label=$1$,position=right,color=green,size=.2]{5}
			\Vertex[x=1.1,y=1.8,style={shape=coordinate}]{6}
			\Vertex[x=2,y=0.2,style={shape=coordinate}]{7}
			
			\Vertex[x=0.4,y=-1.4,label=$\varphi-1$,position=right,color=green,size=.2]{8}
			\Vertex[x=-0.4,y=-2.3,label=$\varphi$,position=right,color=green,size=.2]{9}
			
			\Edge(0)(1)
			\Edge(0)(2)
			\Edge(1)(2)
			\Edge(3)(0)
			\Edge(0)(4)
			\Edge(4)(3)
			\Edge(0)(5)
			\Edge[style={dashed},bend=-45](7)(6)
			
			\draw[black, thick, dash dot, use Hobby shortcut,closed](C)(-2,-2)..(0,0);
		\end{tikzpicture}
		\caption{$K_2 \circledcirc K_3^{(\frac{\varphi-3}{2})}\circledcirc K_{\textbf{k}-\varphi+2}$}
	\end{subfigure}%
	\begin{subfigure}[b]{0.5\linewidth}
		\centering
		\begin{tikzpicture}[scale=.7]
			\Vertex[x=-1,y=-1,label=$K_{\textbf{k}-\varphi+2}$,style={shape=coordinate}]{-1}
			\Vertex[x=-2,y=-2,style={shape=coordinate}]{-2}
			\Vertex[x=0,y=0,color=red,size=.2]{0}
			\Vertex[x=0,y=2,label=$1$,position=left,color=green,size=.2]{1}
			\Vertex[x=1,y=2,label=$2$,position=right,color=green,size=.2]{2}
			\Vertex[x=2,y=0,label=$\varphi-3$,position=right,color=green,size=.2]{3}
			\Vertex[x=2,y=-1,label=$\varphi-2$,position=right,color=green,size=.2]{4}
			\Vertex[x=1.1,y=1.8,style={shape=coordinate}]{6}
			\Vertex[x=2,y=0.2,style={shape=coordinate}]{7}
			
			\Vertex[x=0.4,y=-1.4,label=$\varphi-1$,position=right,color=green,size=.2]{8}
			\Vertex[x=-0.4,y=-2.3,label=$\varphi$,position=right,color=green,size=.2]{9}
			
			\Edge(0)(1)
			\Edge(0)(2)
			\Edge(1)(2)
			\Edge(3)(0)
			\Edge(0)(4)
			\Edge(4)(3)
			\Edge[style={dashed},bend=-45](7)(6)
			
			\draw[black, thick, dash dot, use Hobby shortcut,closed](C)(-2,-2)..(0,0);
		\end{tikzpicture}
		\caption{$K_3^{(\frac{\varphi-2}{2})} \circledcirc K_{\textbf{k}-\varphi+2}$}
	\end{subfigure}
	\caption{ $\mathbb{B}_{\textbf{k},\varphi}$, extremal block graphs with a given $\textbf{k}$ and $\varphi$.}\label{fig:extgraph}
\end{figure}
	
It is easy to see that  $\mathbb{B}_{\textbf{k},\varphi}$ is in $\mathbf{Bl}(\textbf{k}, \varphi)$ (see Figure~\ref{fig:extgraph}). In this article, we prove that the maximum spectral radius $\rho(G)$  amongst all graphs $G$ in $\mathbf{Bl}(\textbf{k}, \varphi)$  is uniquely attained by $\mathbb{B}_{\textbf{k},\varphi}$. 

This article is organized as follows. In Section~\ref{sec:prelim}, we have proved a few preliminary results that are necessary for the subsequent sections. In Section~\ref{sec:maxi}, we first build up the necessary tools in the form of a few results and used these results to prove the maximum spectral radius $\rho(G)$  amongst all graphs $G$ in $\mathbf{Bl}(\textbf{k}, \varphi)$  is uniquely attained by $\mathbb{B}_{\textbf{k},\varphi}$. Finally, in Section~\ref{sec:bounds}, we obtained bounds on $\rho(\mathbb{B}_{\textbf{k},\varphi})$.

\section{Preliminary Results}\label{sec:prelim}

 We begin with notions of equitable partition and the equitable quotient matrix. The  equitable quotient matrix is considered one of the powerful tools to study of spectrum of a matrix. Although, the equitable quotient matrix is defined for more general set-up, but we restrict ourself to the result that is relevant to this article.  For more general set-up reader may refer~\cite{You}.

Suppose $M$ is a symmetric real matrix whose rows and columns are indexed by
$X = \{1, . . . , n \}$. Let $\{X_1, . . . ,X_t \}$ be a partition of $X$.  Let $M$ be
partitioned according to $\{X_1, . . . ,X_t \}$, \emph{i.e.},
\begin{equation*}\label{eqn:M}
		M = \begin{bmatrix}
			M_{11}       &  \dots & M_{1t} \\
			\vdots & \ddots & \vdots\\
			M_{t1}       &  \dots & M_{tt}
		\end{bmatrix},
	\end{equation*}
where $M_{ij}$ denotes the submatrix (block) of $M$ formed by rows in $X_i$ and the
columns in $X_j$. Let $q_{ij}(M)$ denote the average row sum of $M_{ij}$. 
Then the matrix $\mathbf{Q}(M) = [q_{ij}(M)]$ is called the quotient matrix of $M $ with respect to given partition. 	If the row sum of each block $M_{ij}$ is constant then the partition is called equitable, and $\mathbf{Q}(M)$ is called the equitable quotient matrix of $M$.

We now state a theorem that gives a relation between the spectrum of a matrix and the spectrum of its quotient matrix. 
	
\begin{theorem}\label{thm:quesient}~\cite{You}
Let $M$ be a symmetric real matrix such that  a quotient matrix $\mathbf{Q}(M)$ of $M$ is  equitable. Then $\sigma (\mathbf{Q}(M)) \subset \sigma (M)$. Furthermore, if $M$ is a nonnegative matrix, then $\rho (M)= \rho(\mathbf{Q}(M))$. 
\end{theorem}

We now introduce some notations. Let $I_n$ and $ \mathds{1}_n$  denote the identity matrix and the column vector of all ones, respectively. Further, $J_{m \times n}$  denotes the $m\times n$ matrix of all ones and if $m=n$, we use the notation $J_m$. We will use $\mathbf{0}_{m \times n}$ to represent zero matrix. We now prove interesting result on the spectral radius  $\rho(\mathbb{B}_{\textbf{k},\varphi})$.

\begin{lem}\label{lem:roots}
The spectral radius  $\rho(\mathbb{B}_{\textbf{k},\varphi})$ of  $\mathbb{B}_{\textbf{k},\varphi}$ is the largest root of a polynomial 
\begin{equation}\label{eqn:f_g}
\begin{cases}
f(x)=x^3-(\textbf{k}-\varphi+1)x^2-(\varphi-1)x+(\varphi-1)(\textbf{k}-\varphi)+1   & \mbox{ if } \varphi \mbox{ is even,}\\
\\

g(x)=x^4 -(\textbf{k}-\varphi+1)x^3-(\varphi-1)x^2+ ((\varphi-1)(\textbf{k}-\varphi)+2)x - (\textbf{k}-\varphi) & \mbox{ if } \varphi \mbox{ is odd}.
\end{cases}
\end{equation}
\end{lem}

\begin{proof}
	Let $\varphi$ be even and  $\varphi = 2s$ for some $s$. Then, the adjacency matrix of $\mathbb{B}_{\textbf{k},\varphi}$, can be expressed in the following block form
	\begin{equation}\label{eqn:A(B_n,k)}
		\mathbf{A}(\mathbb{B}_{\textbf{k},\varphi}) = \begin{bmatrix}
			J_{\textbf{k}-2s+1} -I_{\textbf{k}-2s+1} & \mathds{1}_{\textbf{k}-2s+1} &\textbf{0}_{(\textbf{k}-2s+1)\times(2s-2)}\\
			\mathds{1}_{\textbf{k}-2s+1}^T & 0 & \mathds{1}_{2(s-1)}^T\\
			\textbf{0}_{(\textbf{k}-2s+1)\times(2s-2)}^T & \mathds{1}_{2(s-1)} & \mathbf{C}
		\end{bmatrix},
	\end{equation} where $\mathbf{C}$ is a block diagonal matrix of order $2(s-1)$ with $J_2-I_2$ as the $2 \times 2$ diagonal blocks. Observe that, each block has constant row sum, hence the equitable quotient matrix of $\mathbf{A}(\mathbb{B}_{\textbf{k},\varphi})$ is given by
	\begin{equation*}\label{eqn:B(A)}
		\mathbf{Q}(\mathbf{A}(\mathbb{B}_{\textbf{k},\varphi})) = \begin{bmatrix}
			\textbf{k}-2s & 1 & 0\\
			\textbf{k}-2s+1 & 0 & 2(s-1)\\
			0 & 1 & 1
		\end{bmatrix}
	\end{equation*} 
and the characteristic polynomial of $\mathbf{Q}(\mathbf{A}(\mathbb{B}_{\textbf{k},\varphi}))$ is given by 
\begin{equation}\label{eqn:ch_f}
f(x)=\det(xI - \mathbf{Q}(\mathbf{A}(\mathbb{B}_{\textbf{k},\varphi}))) =  x^3-(\textbf{k}-\varphi+1)x^2-(\varphi-1)x+(\varphi-1)(\textbf{k}-\varphi)+1.
\end{equation}
	
		For the case when $\varphi$ is odd, let us assume that $\varphi = 2s+1$ for some $s$. Then, the adjacency matrix of $\mathbb{B}_{\textbf{k},\varphi}$ can be expressed in the following block form
	\begin{equation*}\label{eqn:A(B_n,k)-odd}
		\mathbf{A}(\mathbb{B}_{\textbf{k},\varphi}) = \begin{bmatrix}
			J_{\textbf{k}-2s} -I_{\textbf{k}-2s} & \mathds{1}_{\textbf{k}-2s} &\textbf{0}_{(\textbf{k}-2s)\times(2s-2)} & \mathbf{0}_{(\textbf{k}-2s)}\\
			\mathds{1}_{\textbf{k}-2s}^T & 0 & \mathds{1}_{2(s-1)}^T & 1\\
			\textbf{0}_{(\textbf{k}-2s)\times(2s-2)}^T & \mathds{1}_{2(s-1)} & \mathbf{C} & \mathbf{0}_{2(s-1)}\\
			\mathbf{0}_{(\textbf{k}-2s)}^T & 1 & \mathbf{0}_{2(s-1)}^T & 0
		\end{bmatrix},
	\end{equation*} where $\mathbf{C}$ is a block diagonal matrix of order $2(s-1)$ with $J_2-I_2$ as the $2 \times 2$ diagonal blocks. Observe that, each block has constant row sum, hence the equitable quotient matrix of $\mathbf{A}(\mathbb{B}_{\textbf{k},\varphi})$ is given by
\begin{equation*}\label{eqn:B(A)-odd}
\mathbf{Q}(\mathbf{A}(\mathbb{B}_{\textbf{k},\varphi})) = \begin{bmatrix}
	\textbf{k}-2s-1 & 1 & 0 & 0\\
	\textbf{k}-2s & 0 & 2(s-1) & 1\\
	0 & 1 & 1 & 0\\
	0 & 1 & 0 & 0
\end{bmatrix}
\end{equation*} and the characteristic polynomial of $\mathbf{Q}(\mathbf{A}(\mathbb{B}_{\textbf{k},\varphi}))$ is given by 
\begin{equation}\label{eqn:ch_g}
\det(xI - \mathbf{Q}(\mathbf{A}(\mathbb{B}_{\textbf{k},\varphi}))) = x^4 -(\textbf{k}-\varphi+1)x^3-(\varphi-1)x^2+ ((\varphi-1)(\textbf{k}-\varphi)+2)x - (\textbf{k}-\varphi).
\end{equation}
Therefore, in view of Eqns.~\eqref{eqn:ch_f} and~\eqref{eqn:ch_g}, the result follows from Theorem~\ref{thm:quesient}.
\end{proof}

Given Lemma~\ref{lem:roots}, we are ready to compute the characteristic polynomial of  $\mathbf{A}(\mathbb{B}_{\textbf{k},\varphi}).$

\begin{theorem}
For $2 \le \varphi \le \textbf{k} -1$, the  characteristic polynomial $\chi_{\mathbf{A}}(x)$ of  the adjacency matrix $\mathbf{A}(\mathbb{B}_{\textbf{k},\varphi})$ is given by

	\begin{equation*}
		\chi_{\mathbf{A}}(x) = \det(xI - \mathbf{A}(\mathbb{B}_{\textbf{k},\varphi})) = \begin{cases}
			(x-1)^{\frac{\varphi}{2}-2}  (x+1)^{\textbf{k}-\frac{\varphi}{2}-1}  f(x), \textup{ if } \varphi \textup{ is even,} \\
			\\
			(x-1)^{\frac{\varphi-1}{2}-2}  (x+1)^{\textbf{k}-\frac{\varphi-1}{2}-1}  g(x), \textup{ if } \varphi \textup{ is odd,}
		\end{cases}
	\end{equation*}
 where $f(x)$ and $g(x)$ are polynomial as defined in Eqn.~\eqref{eqn:f_g}.
\end{theorem}
\begin{proof}
Let $\varphi$ be even, and $\varphi=2s$ for some $s$. 	Let us partition the vertex set of $\mathbb{B}_{\textbf{k},\varphi}$ as follows
	$$V(\mathbb{B}_{\textbf{k},\varphi}) = \{u_1,u_2,\cdots,u_{\textbf{k}-2s+1}\} \cup \{v\} \cup \{w_1^{(1)},w_2^{(1)}\} \cup \cdots \cup \{w_1^{(s-1)},w_2^{(s-1)}\},$$ where $v$ is the central cut vertex, $\{u_1,u_2,\cdots,u_{\textbf{k}-2s+1}\} \cup \{v\}$ is the vertex set of $K_{\textbf{k}-\varphi+2}$ and $\{v\} \cup \{w_1^{(i)},w_2^{(i)}\}$ is the vertex set of the $i^{th}$ $K_3$ for $1 \le i \le s-1$. Let us consider the following sets of vectors
	$$\mathcal{E}_1 = \{\mathbf{e}(u_1) - \mathbf{e}(u_j) : 2 \le j \le \textbf{k}-2s+1\} \cup \{\mathbf{e}(w_1^{(1)}) - \mathbf{e}(w_2^{(1)})\} \cup \cdots \cup \{\mathbf{e}(w_1^{(s-1)}) - \mathbf{e}(w_2^{(s-1)})\}$$ and
	$$\mathcal{E}_2 = \{\mathbf{e}(w_1^{(1)}) + \mathbf{e}(w_2^{(1)}) - \mathbf{e}(w_1^{(j)}) - \mathbf{e}(w_2^{(j)}) : 2 \le j \le s-1\},$$
	where $\mathbf{e}(u)$ is the column vector with $1$ at $u^{th}$ position and $0$ otherwise. We now  show that each of the vectors in $\mathcal{E}_1$ and $\mathcal{E}_2$ are eigenvectors of the adjacency matrix $\mathbf{A}(\mathbb{B}_{\textbf{k},\varphi})$ as defined in Eqn.~\eqref{eqn:A(B_n,k)}, and the eigenvalues can be observed as follows 
	$$\mathbf{A}(\mathbb{B}_{\textbf{k},\varphi}) \textbf{x} = - \textbf{x} \text{ for all } \textbf{x} \in \mathcal{E}_1 \mbox{ and }
	 \mathbf{A}(\mathbb{B}_{\textbf{k},\varphi}) \textbf{x} = \textbf{x} \text{ for all } \textbf{x} \in \mathcal{E}_2.$$ 
	 Thus, $-1$ is an eigenvalue of $\mathbf{A}(\mathbb{B}_{\textbf{k},\varphi})$ with multiplicity $\textbf{k}-s-1 = \textbf{k}-\frac{\varphi}{2}-1$ and $1$ is an eigenvalue of $\mathbf{A}(\mathbb{B}_{\textbf{k},\varphi})$ with multiplicity $s-2 = \frac{\varphi}{2}-2$. Using Eqn.~\eqref{eqn:ch_f}, it can be verified that $1$ and $-1$ are not roots of the characteristic polynomial $f(x)$ of  $\mathbf{Q}(\mathbf{A}(\mathbb{B}_{\textbf{k},\varphi}))$. In view of the above arguments, Theorem~\ref{thm:quesient} and Eqn.~\eqref{eqn:ch_f}, the result hold true if $\varphi$ is even.
	 
Next, let $\varphi$ be odd and $\varphi=2s+1$ for some $s$. Let us partition the vertex set of $\mathbb{B}_{\textbf{k},\varphi}$ as follows
$$V(\mathbb{B}_{\textbf{k},\varphi}) = \{u_1,u_2,\cdots,u_{\textbf{k}-2s}\} \cup \{v\} \cup \{w_1^{(1)},w_2^{(1)}\} \cup \cdots \cup \{w_1^{(s-1)},w_2^{(s-1)}\}\cup \{w_1^{(s)}\},$$ where $v$ is the central cut vertex, $\{u_1,u_2,\cdots,u_{\textbf{k}-2s}\} \cup \{v\}$ is the vertex set of $K_{\textbf{k}-\varphi+2}$, $\{v\} \cup \{w_1^{(i)},w_2^{(i)}\}$ is the vertex set of the $i^{th}$ $K_3$ for $1 \le i \le s-1$ and $\{v\} \cup \{w_1^{(s)}\}$ is the vertex set of $K_2$. Let us consider the following sets of vectors
$$\mathcal{F}_1 = \{\mathbf{e}(u_1) - \mathbf{e}(u_j) : 2 \le j \le \textbf{k}-2s\} \cup \{\mathbf{e}(w_1^{(1)}) - \mathbf{e}(w_2^{(1)})\} \cup \cdots \cup \{\mathbf{e}(w_1^{(s-1)}) - \mathbf{e}(w_2^{(s-1)})\}$$ and
$$\mathcal{F}_2 = \{\mathbf{e}(w_1^{(1)}) + \mathbf{e}(w_2^{(1)}) - \mathbf{e}(w_1^{(j)}) - \mathbf{e}(w_2^{(j)}) : 2 \le j \le s-1\},$$
where $\mathbf{e}(u)$ is the column vector with $1$ at $u^{th}$ position and $0$ otherwise. Arguing analogous to the case $\varphi$ is even, the result follows from  Theorem~\ref{thm:quesient} and Eqn.~\eqref{eqn:ch_g}.
\end{proof}

Recall, a graph on $2t+1$ vertices consisting of triangles ($K_3$) with central cut vertex  is called a friendship graph and is denoted by $F_{2t+1}$.  We  now calculate the  spectral radius $\rho(F_{2t+1})$ and observe a simple consequence.
\begin{lem}
Let $F_{2s+1}$ be a friendship graph. Then $\rho(F_{2s+1})= \dfrac{1+\sqrt{1+8s}}{2}.$
\end{lem}
\begin{proof}
Let $F_{2s+1}$ be a friendship graph on $2s+1$ vertices. Then, the adjacency matrix of $F_{2s+1}$ can be represented in the following block form
	\begin{equation*}
		\mathbf{A}(F_{2s+1}) = \begin{bmatrix}
			0 & \mathds{1}_{2s}^T \\
			\mathds{1}_{2s} & \mathbf{C}
		\end{bmatrix},
	\end{equation*}
	 where $\mathbf{C}$ is a block diagonal matrix of order $2s$ with $J_2-I_2$ as the $2 \times 2$ diagonal blocks. Then, the equitable quotient matrix of $\mathbf{A}(\mathbb{B}_{\textbf{k},\varphi})$ is given by $\ds 	
		\mathbf{Q}(\mathbf{A}(F_{2s+1})) =  \begin{bmatrix}
			0 & 2s \\
			1 & 1
		\end{bmatrix}$ 
and hence $p(x)=x^2-x-2s$ is the characteristic polynomial of	$\mathbf{Q}(\mathbf{A}(F_{2s+1}))$. Therefore,  the result follows from  Theorem~\ref{thm:quesient}.
\end{proof}
\begin{cor}\label{cor:friend}
If $\varphi$ is even, then the dissociation number of $F_{\varphi+1}$ is $\varphi$ and  $\rho(F_{\varphi+1})= \dfrac{1+\sqrt{1+4\varphi}}{2}.$
\end{cor}


We now prove two lemmas involving adjacency matrix of a graph, the spectral radius and a corresponding Perron vector. These lemmas will play a important role to show the existence and uniqueness of a graph with maximal spectral radius in $\mathbf{Bl}(\textbf{k}, \varphi)$.  Before proceeding further we recall the definition of graph isomorphism,  a related result and its consequences.

Let $G_1=(V(G_1), E(G_1))$ and $G_2=(V(G_2), E(G_2))$ be two graphs and $\pi: V(G_1)\rightarrow V(G_2) $ be a map. The map $\pi $ is said to be a graph homomorphism between $G_1$ and $G_2$, if  $ u \sim v$ in $G_1$ implies that $\pi(u) \sim \pi(v)$ in $G_2$. The map $\pi$ is said to be a graph isomorphism between $G_1$ and $G_2$, if $\pi$ is a graph homomorphism and bijective. Moreover, if $\pi$ is a graph isomorphism between $G_1$ and $G_2$, we say that two graphs $G_1$ and $G_2$ are isomorphic and denoted by $G_1 \cong G_2.$    Furthermore, if $G_1=G_2$ and $\pi$ is a graph isomorphism  between $G_1$ and $G_2$, then $\pi$ is called a graph automorphism of $G_1$. 

It is easy to see that if $G$ is a finite graph and $\pi$ is a graph automorphism of $G$, then $\pi$ is a permutation map on the set of vertices V(G) and we have the following result.

\begin{theorem}\label{prop:permu}\cite{Biggs}
Let $\mathbf{A}$ be the adjacency matrix of a  graph $G=(V(G),E(G))$  and $\pi$ is a permutation map of $V(G)$. Then, $\pi$ is a graph automorphism of $G$ if and only if $\mathbf{P}_\pi \mathbf{A}=\mathbf{A}\mathbf{P}_\pi$, where  $\mathbf{P}_\pi$ is the permutation matrix representing $\pi.$
\end{theorem}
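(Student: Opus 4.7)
The plan is to reduce the matrix identity $\mathbf{P}_\pi \mathbf{A} = \mathbf{A}\mathbf{P}_\pi$ to an entry-wise adjacency invariance via a direct computation, and then match that invariance with the definition of a graph automorphism. First I fix the convention $(\mathbf{P}_\pi)_{ij} = 1$ if $\pi(i) = j$ and $0$ otherwise, so that $\mathbf{P}_\pi$ encodes the action of $\pi$ on the vertex labels of $V(G)$.

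With this convention, a one-line computation yields
\begin{equation*}
(\mathbf{P}_\pi \mathbf{A})_{ij} \;=\; \sum_{k\in V(G)}(\mathbf{P}_\pi)_{ik}\,\mathbf{A}_{kj} \;=\; \mathbf{A}_{\pi(i),\,j}
\quad\text{and}\quad
(\mathbf{A}\mathbf{P}_\pi)_{ij} \;=\; \sum_{k\in V(G)}\mathbf{A}_{ik}\,(\mathbf{P}_\pi)_{kj} \;=\; \mathbf{A}_{i,\,\pi^{-1}(j)}.
\end{equation*}
Hence $\mathbf{P}_\pi \mathbf{A} = \mathbf{A}\mathbf{P}_\pi$ is equivalent to $\mathbf{A}_{\pi(i),\,j} = \mathbf{A}_{i,\,\pi^{-1}(j)}$ for all $i,j \in V(G)$; re-indexing $j \mapsto \pi(j)$, which is legitimate because $\pi$ is a bijection of $V(G)$, recasts this identity as the simultaneous row-column invariance $\mathbf{A}_{\pi(i),\,\pi(j)} = \mathbf{A}_{ij}$ for every $i,j \in V(G)$.

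Because $\mathbf{A}$ is a $0$--$1$ adjacency matrix, the relation $\mathbf{A}_{\pi(i),\pi(j)} = \mathbf{A}_{ij}$ is precisely the statement that $i\sim j$ if and only if $\pi(i)\sim \pi(j)$. Coupled with the standing hypothesis that $\pi$ is a permutation of $V(G)$, this is exactly the content of $\pi$ being a graph automorphism, and reading the chain of equivalences in either direction supplies both implications of the theorem. The only mild subtlety worth flagging is in the forward direction: the paper defines an automorphism as a bijective graph homomorphism, which \emph{a priori} only supplies $i\sim j \Rightarrow \pi(i)\sim\pi(j)$, but the reverse implication follows immediately from finiteness of $E(G)$, since the injection $\{i,j\} \mapsto \{\pi(i),\pi(j)\}$ on the finite edge set must then also be a surjection. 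Beyond consistently maintaining one convention for $\mathbf{P}_\pi$ on both sides of the identity, no step in this argument is a real obstacle.
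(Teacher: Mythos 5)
Your proof is correct: the entry-wise computation $(\mathbf{P}_\pi\mathbf{A})_{ij}=\mathbf{A}_{\pi(i),j}$, $(\mathbf{A}\mathbf{P}_\pi)_{ij}=\mathbf{A}_{i,\pi^{-1}(j)}$ and the re-indexing to $\mathbf{A}_{\pi(i),\pi(j)}=\mathbf{A}_{ij}$ is exactly the standard argument for this classical fact, which the paper itself only cites from Biggs without reproducing a proof. Your handling of the one genuine subtlety --- that a bijective homomorphism of a finite graph automatically preserves non-adjacency as well, via the injectivity-implies-surjectivity argument on the finite edge set --- is also correct and worth having flagged.
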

The result below is an immediate application of the above theorem and hence presented as a corollary without proof. 
\begin{cor}\label{cor:auto}
Let $\mathbf{A}$ be the adjacency matrix of a graph $G=(V(G),E(G))$ and $(\rho(G),\mathbf{x})$ be an eigen-pair corresponding to the spectral radius $\rho(G)$. For $u,v \in V(G)$ and $u\neq v$, if $\pi$ is a graph automorphism of $G$ such that $\pi(u)=v$,  $\pi(v)=u$ and  $\pi(w)=w$ otherwise, then $x_u=x_v.$
\end{cor}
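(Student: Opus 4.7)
The plan is to leverage Theorem~\ref{prop:permu} together with the Perron--Frobenius information already recorded after Eqn.~\eqref{eqn:eq_sp_rd} to upgrade the hypothesis to the much stronger statement that the Perron vector is itself invariant under $\mathbf{P}_\pi$; the conclusion $x_u=x_v$ then drops out by reading off one coordinate.

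First I would apply Theorem~\ref{prop:permu}: since $\pi$ is a graph automorphism of $G$, we have $\mathbf{P}_\pi \mathbf{A} = \mathbf{A}\,\mathbf{P}_\pi$. Multiplying both sides by $\mathbf{x}$ and using $\mathbf{A}\mathbf{x} = \rho(G)\mathbf{x}$ shows that $\mathbf{A}(\mathbf{P}_\pi \mathbf{x}) = \rho(G)\,(\mathbf{P}_\pi \mathbf{x})$, so $\mathbf{P}_\pi \mathbf{x}$ is another eigenvector of $\mathbf{A}$ for the eigenvalue $\rho(G)$. Perron--Frobenius (as recalled in Section~\ref{sec:intro}) guarantees that $\rho(G)$ is simple, hence its eigenspace is one-dimensional, and therefore $\mathbf{P}_\pi \mathbf{x} = c\,\mathbf{x}$ for some scalar $c$.

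The next step is to pin down $c$. Because $\pi$ fixes every vertex $w \notin \{u,v\}$, comparing the $w$-th coordinates of $\mathbf{P}_\pi \mathbf{x}$ and $c\,\mathbf{x}$ gives $x_w = c\,x_w$, and the strict positivity of the Perron vector forces $c=1$ (the degenerate case $|V(G)|=2$ is trivial: then $G=K_2$ and $x_u=x_v$ automatically). Reading off coordinate $u$ of the identity $\mathbf{P}_\pi \mathbf{x} = \mathbf{x}$ then yields $x_v = x_u$, as desired.

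Since every step uses only a previously cited result plus one line of linear algebra, there is no substantive obstacle here. The only small bookkeeping detail to be careful with is the indexing convention for $\mathbf{P}_\pi$ — namely whether $(\mathbf{P}_\pi \mathbf{x})_i = x_{\pi(i)}$ or $x_{\pi^{-1}(i)}$ — but since $\pi$ restricts to an involution on $\{u,v\}$, either convention delivers the same final equality $x_u=x_v$.
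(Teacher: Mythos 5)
Your argument is correct and is precisely the ``immediate application'' of Theorem~\ref{prop:permu} that the paper has in mind when it states Corollary~\ref{cor:auto} without proof: commute $\mathbf{P}_\pi$ past $\mathbf{A}$, invoke the simplicity of $\rho(G)$ to get $\mathbf{P}_\pi\mathbf{x}=c\,\mathbf{x}$, and pin down $c=1$ from the nonzero fixed coordinates (with the $|V(G)|=2$ case handled separately). No gaps.
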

\begin{rem}
A stronger inference from Theorem~\ref{prop:permu} can be made in comparison to the Corollary~\ref{cor:auto}, and we state it as follows: Let $\mathbf{A}$ be the adjacency matrix of a graph $G=(V(G),E(G))$ and $(\rho(G),\mathbf{x})$ be an eigen-pair corresponding to the spectral radius $\rho(G)$. If $\pi$ is a graph automorphism of  $G$,  then $\mathbf{A}_\pi=\mathbf{P}_\pi \mathbf{A} \mathbf{P}_\pi^T $ is the adjacency matrix of $G$ under the vertex indexing due to $\pi$, and $\mathbf{x}_\pi = (x_{\pi(v)})_{v\in V(G)}$  is a Perron vector of $\rho(G)$.
\end{rem}

\begin{lem}\label{lem:comp}
  Let $\mathbf{A}$ be the adjacency matrix of a graph $G$ and $(\rho(G),\mathbf{x})$ be an eigen-pair corresponding to the spectral radius $\rho(G)$. For $n\geq 3$, let $K_n$ be a pendant block of $G$ on vertices $w_1,w_2,\ldots, w_n$  with $w_1$ as the cut vertex. Then, $x_{w_2}= x_{w_2}=\cdots=x_{w_n}$ and $x_{w_1} > x_{w_i}$ for $2\leq i \leq n.$
\end{lem}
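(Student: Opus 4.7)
The plan is to exploit the symmetry of the pendant block $K_n$ to obtain the equalities, and then use the eigenvalue equation together with a lower bound on $\rho(G)$ to obtain the strict inequality.

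First I would observe that since $K_n$ is a pendant block and $w_1$ is the (only) cut vertex in $K_n$, each vertex $w_i$ with $2 \le i \le n$ has neighbourhood exactly $\{w_1,w_2,\ldots,w_n\}\setminus\{w_i\}$ in the whole graph $G$. Consequently, for any $2 \le i < j \le n$, the map $\pi$ that swaps $w_i$ with $w_j$ and fixes every other vertex preserves adjacency in $G$, i.e.\ $\pi$ is a graph automorphism of $G$. Corollary~\ref{cor:auto} then yields $x_{w_i}=x_{w_j}$. Denote this common value by $c$.

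Next I would use the eigenvalue equation $\mathbf{A}\mathbf{x}=\rho(G)\mathbf{x}$ at the vertex $w_2$. Since $N_G(w_2)=\{w_1,w_3,\ldots,w_n\}$ and $x_{w_3}=\cdots=x_{w_n}=c$, this reads
\begin{equation*}
\rho(G)\, c \;=\; x_{w_1} + (n-2)c,
\end{equation*}
so $x_{w_1} = (\rho(G)-n+2)\,c$. Thus $x_{w_1}>c$ is equivalent to $\rho(G)>n-1=\rho(K_n)$.

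To finish, I would argue that $K_n$ is a proper (connected) subgraph of $G$: since $w_1$ is a cut vertex of $G$, the graph $G-w_1$ is disconnected, forcing the existence of at least one vertex of $G$ outside $K_n$. By Perron--Frobenius (equivalently, by a repeated application of Lemma~\ref{lem:sr_edge} after embedding $K_n$ in $G$), the spectral radius of a connected graph strictly exceeds that of any proper connected subgraph, so $\rho(G)>\rho(K_n)=n-1$. Combined with the identity $x_{w_1}=(\rho(G)-n+2)c$, this gives $x_{w_1}>c=x_{w_i}$ for all $2\le i\le n$, completing the proof. The argument is essentially routine; the only point requiring care is the comparison $\rho(G)>\rho(K_n)$, which rests on observing that the cut-vertex hypothesis forces $K_n\subsetneq G$.
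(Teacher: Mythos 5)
Your proof is correct and follows essentially the same route as the paper: the transposition automorphisms of the non-cut vertices of the pendant $K_n$ plus Corollary~\ref{cor:auto} give $x_{w_2}=\cdots=x_{w_n}$, the eigenvalue equation at $w_2$ gives $x_{w_1}=(\rho(G)-n+2)x_{w_2}$, and $\rho(G)>\rho(K_n)=n-1$ (since $K_n$ is a proper subgraph of the connected graph $G$) yields the strict inequality. The paper's proof is identical in substance, merely writing the eigenvalue equations before invoking the automorphisms.
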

\begin{proof}
 For $n\geq 3$, let $K_n$ be a pendant block of $G$ on vertices $w_1,w_2,\ldots, w_n$  with $w_1$ as the cut vertex. Then, using $\mathbf{A}\mathbf{x}=\rho(G)\mathbf{x}$, we have \begin{equation}\label{eqn:3.2}
\rho(G)x_{w_i}=x_{w_1}+ \sum_{j=2\atop j\neq i}^n x_{w_j} \mbox{ for  }  2\leq i \leq n.
\end{equation}
 Using $w_2,w_3,\ldots,w_n$  are not cut vertices and $K_n$ is a complete graph, we have the following: For every $2\leq i,j\leq n$ and $i\neq j$, there exists a graph automorphism $\pi:V(G)\rightarrow V(G)$ such that $\pi(w_i)=w_j$, $\pi(w_j)=w_i$ and $\pi(w)=w$ otherwise. Therefore, by Corollary~\ref{cor:auto}, we have 
\begin{equation}\label{eqn:3_3}
x_{w_2}=x_{w_3}=\cdots=x_{w_n}.
\end{equation}
Using Eqn.~\eqref{eqn:3_3}, the Eqn.~\eqref{eqn:3.2}  residues to 
$\ds \rho(G)x_{w_i}=x_{w_1}+ (n-2) x_{w_i} \mbox{ for  }  2\leq i \leq n. $ Thus,
\begin{equation}\label{eqn:3_4}
x_{w_1}= (\rho(G)- (n-2))  x_{w_i} \mbox{ for  }  2\leq i \leq n.
\end{equation}

It is known that $\rho(K_n)=n-1$. Since $K_n$ is block of $G$, by Lemma~\ref{lem:sr_edge}, we have $\rho(G)>\rho(K_n)=n-1$. Therefore, using $\rho(G)>n-1$ in  Eqn.~\eqref{eqn:3_4}, the desired result follows.
\end{proof}

\begin{lem}\label{lem:cut-sift}
Let $G$ be a graph and $H$ be a block in $G$. Let $u$ and $v$ be two cut vertices of $G$ that are in $H$.  Let $\mathbf{A}$ be the adjacency matrix of $G$ and $(\rho(G),\mathbf{x})$ be an eigen-pair corresponding to the spectral radius $\rho(G)$ such that $x_u \ge x_v$. If $G^*$ is a graph obtained from $G$ by removing all the blocks at $v$ and attaching these blocks at $u$, then  $\rho(G) \leq \rho(G^*)$. Moreover,  $\rho(G) < \rho(G^*)$, if a  Perron vector of $G^*$ is not a Perron vector of $G$.
\end{lem}

\begin{proof}
Let  $N_{G}^{\mathbf{Bl}}(u)= \{H, B_1,B_2, \cdots, B_s\} \mbox{ and } N_{G}^{\mathbf{Bl}}(v)= \{H, C_1,C_2, \cdots, C_t\}. $ For graph $G$, let $U$ be the set of vertices in $C_l$ for $1\leq l\leq t$  excluding the cut vertex $v$. From the hypothesis, for graph $G^*$, we have
 $$N_{G^*}^{\mathbf{Bl}}(u)= \{H, B_1,B_2, \cdots, B_s, C_1,C_2, \cdots, C_t\},$$ 
and $U$ is the set of vertices in $C_l$ for $1\leq l\leq t$ excluding the cut vertex $u$.   Let $\mathbf{A}^*$ be the adjacency matrix of $G^*$.  Then
	\begin{equation*}
		\frac{1}{2} \mathbf{x}^T (\mathbf{A}^*-\mathbf{A}) \mathbf{x}  = - \sum_{i\in U}x_v  x_{i}
 + \sum_{i\in U} x_u  x_{i} = \sum_{i\in U}(x_u -x_v) x_{i} \ge 0.
	\end{equation*} 
Hence,
\begin{equation}\label{eqn:3.1}
\rho(G)= \mathbf{x}^T \mathbf{A} \mathbf{x} \leq \mathbf{x}^T \mathbf{A}^* \mathbf{x} \leq \rho(G^*),
\end{equation}	
which implies that $\rho(G) \le \rho(G^*)$.	 

Further, let us assume that a Perron vector of $G^*$ is not a Perron vector of $G$.   Using $\rho(G)$ is simple, we get  $\mathbf{x}$ is not a Perron vector of $\rho(G^*)$. Therefore, $\mathbf{x}^T \mathbf{A}^* \mathbf{x} < \rho(G^*)$ and hence  using Eqn.~\eqref{eqn:3.1}, we get $\rho(G) < \rho(G^*)$.  This completes the  proof.
\end{proof}

\section{Maximization of Spectral Radius    in $\mathbf{Bl}(\textbf{k}, \varphi)$}\label{sec:maxi}

In this section, we build up the necessary tools in the form of a few results that help us to prove the maximum spectral radius $\rho(G)$  amongst all graphs $G$ in $\mathbf{Bl}(\textbf{k}, \varphi)$  is uniquely attained by $\mathbb{B}_{\textbf{k},\varphi}$. We begin with the following definition.

A graph $\widehat{G}$ in $ \mathbf{Bl}(\textbf{k}, \varphi)$  is said to be a graph with maximal spectral radius in $\mathbf{Bl}(\textbf{k}, \varphi)$ if $$\rho(G) \leq \rho(\widehat{G}) \mbox{ for all } G \in \mathbf{Bl}(\textbf{k}, \varphi).$$

\subsection{ Properties of a Graph with Maximal Spectral Radius in $\mathbf{Bl}(\textbf{k}, \varphi)$ }

In this subsection, we prove a few results that give us a few properties of a  graph with maximal spectral radius in $\mathbf{Bl}(\textbf{k}, \varphi)$. We begin with the following proposition.

\begin{prop}\label{prop:1}
Let  $G \in \mathbf{Bl}(\textbf{k}, \varphi)$ and $\mathcal{D}$ be a maximal dissociation set of $G$.  Then $G$ is not a graph with maximal spectral radius in $\mathbf{Bl}(\textbf{k}, \varphi)$, if either of the following conditions hold true

\begin{itemize}
\item[$(i)$] $G$ has a block $B$ such that $|B \cap \mathcal{D}|=0$, 

\item[$(ii)$]  $\mathcal{D}$ contain a cut vertex of $G$,

\item [$(iii)$] $G$  have two adjacent blocks $B_1$ and $B_2$ such that $|B_1 \cap \mathcal{D}|=|B_2 \cap \mathcal{D}|=1$.
\end{itemize}
\end{prop}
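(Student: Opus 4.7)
The plan is to prove each of $(i)$, $(ii)$, $(iii)$ by contradiction: assuming $G$ to be extremal, I will exhibit some $G' \in \mathbf{Bl}(\mathbf{k}, \varphi)$ with $\rho(G') > \rho(G)$. In every case the modification is a \emph{block-merge}: given two distinct blocks $B_i, B_j$ of $G$ sharing a cut vertex $v$, let $\widetilde{G}$ be the graph obtained by adding every missing edge between $B_i \setminus \{v\}$ and $B_j \setminus \{v\}$. Then $B_i \cup B_j$ becomes a single complete block of $\widetilde{G}$ while the rest of the block structure is unaffected, so $\widetilde{G}$ is still a block graph on $V(G)$. Moreover $\widetilde{G} \supsetneq G$ as a graph, so Lemma~\ref{lem:sr_edge} gives $\rho(\widetilde{G}) > \rho(G)$; and since any dissociation set of $\widetilde{G}$ is in particular a dissociation set of $G$, one has $\varphi(\widetilde{G}) \leq \varphi$. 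Hence it is enough to exhibit a dissociation set of $\widetilde{G}$ of size $\varphi$, for which I will always try $\mathcal{D}$ itself.

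For part $(i)$, let $B$ be the block with $|B \cap \mathcal{D}| = 0$. The observations stated before the proposition force $B$ to be non-pendant (else $|B \cap \mathcal{D}| = 2$), so $B$ contains a cut vertex $a$; I take any other block $B'$ of $G$ incident to $a$. Merging $B$ and $B'$ only creates edges between $B \setminus \{a\}$ and $B' \setminus \{a\}$; since $B \setminus \{a\}$ has no $\mathcal{D}$-vertex at all, no new $\mathcal{D}$-$\mathcal{D}$ edge is created, and $\mathcal{D}$ remains a dissociation set of $\widetilde{G}$.

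For part $(ii)$, let $v \in \mathcal{D}$ be a cut vertex with incident blocks $B_1, \ldots, B_m$, $m \geq 2$. Because $v$ lies in $\mathcal{D}$ and the dissociation property forces $v$ to have at most one $\mathcal{D}$-neighbor, at most one $B_i$ can satisfy $|B_i \cap \mathcal{D}| = 2$. I pick indices $i \neq j$ so that at least one of them, say $B_i$, satisfies $B_i \cap \mathcal{D} = \{v\}$; merging $B_i$ with $B_j$ then creates only edges from $B_i \setminus \{v\}$ (no $\mathcal{D}$-vertex) to $B_j \setminus \{v\}$, and $\mathcal{D}$ is again preserved.

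Part $(iii)$ is where I expect the main obstacle, because the two blocks named in the hypothesis may themselves contain $\mathcal{D}$-vertices other than $v$, so a direct merge could create a forbidden path in the subgraph induced by $\mathcal{D}$. Let $v$ be the cut vertex shared by $B_1$ and $B_2$. If $v \in \mathcal{D}$ then hypothesis $(ii)$ already applies, so I may assume $v \notin \mathcal{D}$ and write $\{w_i\} = B_i \cap \mathcal{D}$ with $w_i \in B_i \setminus \{v\}$. Because $B_i \cap \mathcal{D} = \{w_i\}$, any $\mathcal{D}$-neighbor of $w_i$ must lie outside $B_i$, and by dissociation there is at most one such. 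If neither $w_1$ nor $w_2$ has a $\mathcal{D}$-neighbor in $G$, I merge $B_1$ with $B_2$: the only new $\mathcal{D}$-$\mathcal{D}$ edge is $w_1 w_2$, so each of $w_1, w_2$ ends with $\mathcal{D}$-degree exactly $1$. Otherwise, say $w_1$ has a $\mathcal{D}$-neighbor $u$; then $u$ and $w_1$ share some block $B^\ast \neq B_1$ with $|B^\ast \cap \mathcal{D}| = 2$, and I merge $B_1$ with $B^\ast$ at $w_1$ instead. The new edges run from $B_1 \setminus \{w_1\}$ (entirely outside $\mathcal{D}$) to $B^\ast \setminus \{w_1\}$, so no new $\mathcal{D}$-$\mathcal{D}$ edge is introduced and $\mathcal{D}$ survives in $\widetilde{G}$. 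In every sub-case the resulting $\widetilde{G}$ lies in $\mathbf{Bl}(\mathbf{k}, \varphi)$ and satisfies $\rho(\widetilde{G}) > \rho(G)$, contradicting extremality of $G$.
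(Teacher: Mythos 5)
Your proof is correct and follows essentially the same strategy as the paper: merge adjacent blocks into a single complete block, invoke Lemma~\ref{lem:sr_edge} for the strict increase in spectral radius, and check that $\mathcal{D}$ survives as a dissociation set so the dissociation number is unchanged. The only differences are cosmetic (the paper merges all blocks at the cut vertex in part $(ii)$ and, in part $(iii)$, deduces from part $(ii)$ that the two $\mathcal{D}$-vertices are non-cut vertices rather than running your explicit sub-case analysis), and your observation that $\varphi$ cannot increase under edge addition makes the membership in $\mathbf{Bl}(\textbf{k},\varphi)$ argument slightly cleaner.
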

\begin{proof}
Let $B$ be a block of $G$ such that $B\cap \mathcal{D}= \emptyset$, and $C$ be a block that is adjacent to $B$. Let $G^*$ be a block graph obtained from $G$ by adding edges such that the induced subgraph on the set of vertices on  $B\cup C$ is a complete graph. Thus, by  Lemma~\ref{lem:sr_edge}, we get $\rho(G)< \rho(G^*)$. Moreover, $B\cap \mathcal{D}= \emptyset$ gives  $G^* \in \mathbf{Bl}(\textbf{k}, \varphi)$. This proves part $(i)$.

Next, let $u$ be a cut vertex in $G$ such that $u \in \mathcal{D}.$ Let $N_{G}^{\mathbf{Bl}}(u)= \{ B_1,B_2, \cdots, B_s\}.$ 
Then, either $(B_i\setminus\{u\}) \cap \mathcal{D} = \emptyset$ for all $i=1,2,\ldots, s$ or there exists a unique $j \in \{1,2,\ldots, s\}$ such that $(B_j\setminus\{u\}) \cap \mathcal{D} \neq \emptyset.$ Let $G^{**}$ be a block graph obtained from $G$ by adding edges such that the induced subgraph on the set of vertices on  $\cup_{i=1}^s B_i$ is a complete graph. By construction, $G^{**} \in \mathbf{Bl}(\textbf{k}, \varphi)$, and using  Lemma~\ref{lem:sr_edge}, we get $\rho(G)< \rho(G^{**})$.  This proves part $(ii)$.

Finally to prove part $(iii)$, suppose on the contrary, let  $G$ be a graph with maximal spectral radius in $\mathbf{Bl}(\textbf{k}, \varphi)$ such that $G$  have two adjacent blocks $B_1$ and $B_2$ such that $|B_1 \cap \mathcal{D}|=|B_2 \cap \mathcal{D}|=1$. Let $u \in B_1 \cap \mathcal{D}$ and $v\in B_2 \cap \mathcal{D}$. Then, Part $(ii)$ yields that $u$ and $v$ are not cut vertices in $G$.

 Let $G^{***}$ be a block graph obtained from $G$ by adding edges such that the induced subgraph on the set of vertices on  $B_1\cup B_2$ is a complete graph. Then, by Lemma~\ref{lem:sr_edge}, we get $\rho(G)< \rho(G^{***})$. By construction, $u$ and $v$ are not cut vertices in $G^{***}$. Thus,  $\mathcal{D}$ is a maximal dissociation set of $G^{***}$ as well. Hence $G^{***} \in \mathbf{Bl}(\textbf{k}, \varphi)$,  which is a contradiction. This completes the proof.
\end{proof}

Before establishing a few more properties of a graph with maximal spectral radius in $\mathbf{Bl}(\textbf{k}, \varphi)$, we first prove two important lemmas. 

\begin{lem}\label{lem:Kn-Kn_1}
Let $G\in \mathbf{Bl}(\textbf{k}, \varphi)$ and $\mathcal{D}$ be a maximal dissociation set of $G$. Let $\mathbf{A}$ be the adjacency matrix of $G$ and $(\rho(G),\mathbf{x})$ be an eigen-pair corresponding to the spectral radius $\rho(G)$. For $m,n \ge 4$, let $K_m \circledcirc K_n$ be an induced subgraph of $G$ joined via the cut vertex $w$ such that $w \notin \mathcal{D}$, $K_m \cap \mathcal{D} =\{p,q \}$ and $K_n \cap \mathcal{D}=\{r,s  \}$. 
\begin{enumerate}
\item[(a)] Let $x_p \ge x_r\geq x_s$ and  $x_q \ge x_r \geq x_s$. If  $G^*$ is a block graph obtained from $G$ using the following operations (see Figure~\ref{fig:1.m,n>4}):
\begin{enumerate}
\item[$1.$] Delete  edges $rj$ and $sj$ for all  $j \in K_n \setminus \{r,s,w\}$,
\item[$2.$]  Add edge $ij$ for all $i \in K_m \setminus \{w\}$ and for all $j \in K_n \setminus \{r,s,w\}$,
\end{enumerate}
then  $G^*\in \mathbf{Bl}(\textbf{k}, \varphi)$ and $\rho(G) < \rho (G^*)$.
\item[(b)] Let $x_p \ge x_r\geq x_s$ and  $x_r>x_q$. If $G^{**}$ is a block graph obtained from $G$ using the following operations (see Figure~\ref{fig:1.m,n>4}):
\begin{enumerate}
\item[$1.$] Delete  edge $qi$   for all $i \in K_n \setminus \{q,w\}$, 
\item [$2.$]Delete edge $sj$ for all  $j \in K_n \setminus \{s,w\}$, 
\item[$3.$]  Add edges $qs$ and $ij$ for all $i \in K_m \setminus \{q,w\}$ and for all $j \in K_n \setminus \{s,w\}$,
\end{enumerate}
then,  $G^{**}\in \mathbf{Bl}(\textbf{k}, \varphi)$ and $\rho(G) < \rho (G^{**})$.
\end{enumerate}

\end{lem}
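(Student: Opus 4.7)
The plan is to use the Rayleigh characterization~\eqref{eqn:eq_sp_rd}: evaluating at the Perron vector $\mathbf{x}$ of $G$ gives $\rho(G)=\mathbf{x}^T\mathbf{A}(G)\mathbf{x}/\mathbf{x}^T\mathbf{x}$ and $\rho(G^*)\ge \mathbf{x}^T\mathbf{A}(G^*)\mathbf{x}/\mathbf{x}^T\mathbf{x}$ (and analogously for $G^{**}$), so it suffices to establish (i) $G^*,G^{**}\in\mathbf{Bl}(\textbf{k},\varphi)$ and (ii) $\mathbf{x}^T\mathbf{A}(G^*)\mathbf{x}>\mathbf{x}^T\mathbf{A}(G)\mathbf{x}$ (likewise for $G^{**}$). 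The underlying format is the same for both parts, but the factorization and the bookkeeping for $\mathcal{D}$ differ.

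First I would identify the block structure of the new graphs. In part (a), after the stated operations the block $K_m\circledcirc K_n$ is replaced by $K_{m+n-3}\circledcirc K_3$ meeting at $w$, where the big clique consists of $K_m$ together with $K_n\setminus\{r,s,w\}$ and the triangle is $\{w,r,s\}$. In part (b) an analogous description produces $K_{m+n-3}\circledcirc K_3$ at $w$, but now the big clique is $(K_m\setminus\{q\})\cup(K_n\setminus\{s\})$ and the triangle is $\{w,q,s\}$. In each case a quick vertex count shows $\textbf{k}$ is preserved. For the dissociation invariant, the original set $\mathcal{D}$ remains a dissociation set of the modified graph: in (a) the pairs $\{p,q\}$ and $\{r,s\}$ still lie in their respective new blocks; in (b) the deletions destroy the edges $pq$ and $rs$ precisely as the additions create $pr$ and $qs$, so the induced subgraph on $\mathcal{D}$ still has maximum degree $1$. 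Hence $\varphi(G^*),\varphi(G^{**})\ge\varphi$. For the reverse bound I would use the facts that each clique admits at most two dissociation-set members and that, because $w$ belongs to both new blocks, including $w$ in a candidate set together with a vertex from each adjacent clique forces $w$ to have degree $2$ in the induced subgraph; a short case split on whether $w$ is chosen then caps the contribution of the modified region at $4$, matching the original.

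The Rayleigh step is a direct computation. For (a), after cancelling the unchanged terms,
\[
\tfrac{1}{2}\bigl(\mathbf{x}^T\mathbf{A}(G^*)\mathbf{x}-\mathbf{x}^T\mathbf{A}(G)\mathbf{x}\bigr)
=\sum_{j\in K_n\setminus\{r,s,w\}} x_j\Bigl[(x_p-x_r)+(x_q-x_s)+\sum_{i\in K_m\setminus\{p,q,w\}} x_i\Bigr],
\]
which is strictly positive by the hypotheses $x_p\ge x_r$, $x_q\ge x_s$ and positivity of the Perron entries, the assumption $m\ge 4$ ensuring the inner sum is nonempty. For (b), writing $A:=\sum_{i\in K_m\setminus\{q,w\}} x_i$ and $B:=\sum_{j\in K_n\setminus\{s,w\}} x_j$, the edge bookkeeping collapses to the clean factorization
\[
\tfrac{1}{2}\bigl(\mathbf{x}^T\mathbf{A}(G^{**})\mathbf{x}-\mathbf{x}^T\mathbf{A}(G)\mathbf{x}\bigr)
= x_q x_s + AB - x_q A - x_s B = (A-x_s)(B-x_q),
\]
and positivity follows from $A\ge x_p\ge x_s$, strict because $m\ge 4$ contributes an extra positive term, together with $B\ge x_r>x_q$ by the hypothesis. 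The step I expect to require the most care is the upper bound $\varphi(G^{**})\le\varphi$ in part (b), where the pair structure of $\mathcal{D}$ is reshuffled across blocks; the cleanest route is the case split on whether $w$ lies in a candidate dissociation set, combined with the two-vertex cap per clique.
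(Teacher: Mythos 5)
Your proposal is correct and follows essentially the same route as the paper: compare Rayleigh quotients at the Perron vector of $G$, use the positivity of its entries together with the orderings $x_p\ge x_r\ge x_s$, $x_q\ge x_r\ge x_s$ (resp.\ $x_r>x_q$) and the nonemptiness of $K_m\setminus\{p,q,w\}$ and $K_n\setminus\{r,s,w\}$ guaranteed by $m,n\ge 4$, and observe that $\mathcal{D}$ stays a maximum dissociation set of the modified graph. Your exact factorization $(A-x_s)(B-x_q)$ in part (b) is a tidier way to organize the paper's longer term-by-term estimate, but it is the same computation in substance.
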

\begin{figure*}[ht]
	\centering
	\begin{tikzpicture}[scale=.6]
		\Vertex[x=1,y=-1,size=.1,label=$w$,position=left,color=red]{B}
		\Vertex[x=-0.6,y=-1,label=$K_{m}$,style={shape=coordinate}]{-1} 
		\Vertex[x=2.7,y=-1,label=$K_{n}$,style={shape=coordinate}]{-1}
		\Vertex[x=4.35,y=-1.5,size=.1,label=$s$,position=right,color=green]{D}
		\Vertex[x=4.35,y=-0.5,size=.1,label=$r$,position=right,color=green]{E}
		\Vertex[x=-2.35,y=-1.5,size=.1,label=$q$,position=left,color=green]{F}
		\Vertex[x=-2.35,y=-0.5,size=.1,label=$p$,position=left,color=green]{G}

		\draw[black, thick, dash dot, use Hobby shortcut,closed](C)(-2,0)..(-2,-2)..(1,-1);
		
		\draw[black, thick, dash dot, use Hobby shortcut,closed](C)(4,0)..(1,-1)..(4,-2);
		\node at (1,-4) {$G$};
	\end{tikzpicture}\hspace{0.4cm}
	\begin{tikzpicture}[scale=.6]
	\Vertex[x=1,y=-1,size=.1,label=$w$,position=above,color=red]{B}
	\Vertex[x=-0.7,y=-1,label=$K_{m+n-3}$,style={shape=coordinate}]{-1} 
	\Vertex[x=3.1,y=-1,label=$K_{3}$,style={shape=coordinate}]{-1}
	\Vertex[x=4.35,y=-2,size=.1,label=$s$,position=right,color=green]{D}
	\Vertex[x=4.35,y=0,size=.1,label=$r$,position=right,color=green]{E}
	\Vertex[x=-2.35,y=-1.5,size=.1,label=$q$,position=left,color=green]{F}
	\Vertex[x=-2.35,y=-0.5,size=.1,label=$p$,position=left,color=green]{G}
	
	\Edge[lw=2pt](B)(D)
	\Edge[lw=2pt](B)(E)
	\Edge[lw=2pt](E)(D)
	\node at (1,-4) {$G^*$};
	\draw[black, thick, dash dot, use Hobby shortcut,closed](C)(-2,0)..(-2,-2)..(1,-1);	
	\end{tikzpicture}\hspace{0.4cm}
	\begin{tikzpicture}[scale=.6]
	\Vertex[x=1,y=-1,size=.1,label=$w$,position=above,color=red]{B}
	\Vertex[x=-0.7,y=-1,label=$K_{m+n-3}$,style={shape=coordinate}]{-1} 
	\Vertex[x=3.1,y=-1,label=$K_{3}$,style={shape=coordinate}]{-1}
	\Vertex[x=4.35,y=-2,size=.1,label=$s$,position=right,color=green]{D}
	\Vertex[x=4.35,y=0,size=.1,label=$q$,position=right,color=green]{E}
	\Vertex[x=-2.35,y=-1.5,size=.1,label=$r$,position=left,color=green]{F}
	\Vertex[x=-2.35,y=-0.5,size=.1,label=$p$,position=left,color=green]{G}

	\Edge[lw=2pt](B)(D)
	\Edge[lw=2pt](B)(E)
	\Edge[lw=2pt](E)(D)
\node at (1,-4) {$G^{**}$};
\draw[black, thick, dash dot, use Hobby shortcut,closed](C)(-2,0)..(-2,-2)..(1,-1);
	\end{tikzpicture}
	\caption{ Graph operations in Lemma~\ref{lem:Kn-Kn_1}}\label{fig:1.m,n>4}
\end{figure*}
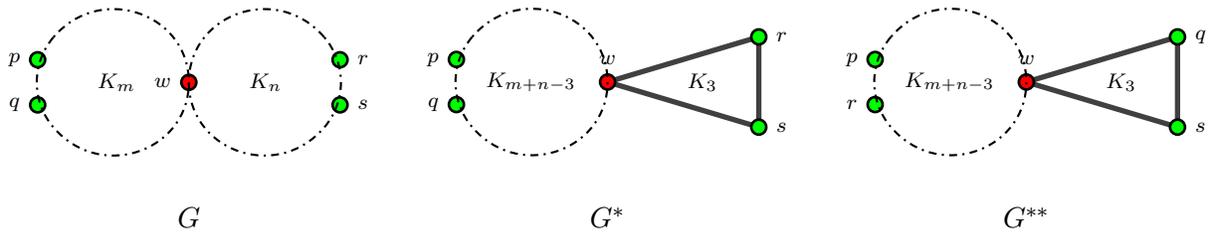
\begin{proof}
Using our hypothesis $m,n\geq 4$ gives us $K_m \setminus \{p,q,w\}$ and $K_n \setminus \{r,s,w\}$ are non empty sets. Let  $\mathbf{A}^*$ be the adjacency matrix of $G^*$.  Using   $x_p \ge x_r\geq x_s$ and  $x_q \ge x_r \geq x_s$, we have 
\begin{align*}
	\frac{1}{2} \mathbf{x}^T (\mathbf{A}^*-\mathbf{A}) \mathbf{x}  &= - \sum_{j \in K_n \setminus \{r,s,w\}} x_r x_j - \sum_{j \in K_n \setminus \{r,s,w\}} x_s x_j + \sum_{i \in K_m \setminus \{w\}} \sum_{j \in K_n \setminus \{r,s,w\}} x_i x_j\\
&> - \sum_{j \in K_n \setminus \{r,s,w\}} x_r x_j \, - \sum_{j \in K_n \setminus \{r,s,w\}}x_s x_j +  \sum_{j \in K_n \setminus \{r,s,w\}}x_p x_j +  \sum_{j \in K_n \setminus \{r,s,w\}} x_q x_j\\	
	&=   \sum_{j \in K_n \setminus \{r,s,w\}}\Big[(x_p - x_r) x_j  +  (x_q - x_s)x_j \Big]\\	
	& \ge 0.
\end{align*}
Thus, $\rho(G) < \rho (G^*)$. Further, let  $\mathbf{A}^{**}$ be the adjacency matrix of $G^{**}$. Using   $x_p \ge x_r\geq x_s$ and  $x_r>x_q$, we have 
\begin{align*}
	\frac{1}{2} \mathbf{x}^T (\mathbf{A}^{**}-\mathbf{A}) \mathbf{x}  &=x_sx_q  - \sum_{i \in K_m \setminus \{q,w\}}x_q x_i \, -   \sum_{j \in K_n \setminus \{s,w\}} x_s x_j +  \sum_{i \in K_m \setminus \{q,w\}}  \sum_{j \in K_n \setminus \{s,w\}} x_i x_j\\	
	&=x_sx_q  - \sum_{i \in K_m \setminus \{q,w\}}x_q x_i \, -   \sum_{j \in K_n \setminus \{s,w\}} x_s x_j \\
& \qquad \qquad	\qquad \qquad + \sum_{i \in K_m \setminus \{q,w\}} x_ix_r \, + \sum_{i \in K_m \setminus \{q,w\}}  \sum_{j \in K_n \setminus \{r, s,w\}} x_i x_j\\
& > x_sx_q  - \sum_{i \in K_m \setminus \{q,w\}}x_q x_i \, -   \sum_{j \in K_n \setminus \{s,w\}} x_s x_j \, + \sum_{i \in K_m \setminus \{q,w\}} x_ix_r \, +   \sum_{j \in K_n \setminus \{r, s,w\}} x_p x_j \\
& = x_sx_q  + \Big[ \sum_{i \in K_m \setminus \{q,w\}}(x_r-x_q) x_i \Big] \, + \Big[  \sum_{j \in K_n \setminus \{r,s,w\}}(x_p- x_s) x_j \Big] \, - x_sx_r \\
& =   \Big[ \sum_{i \in K_m \setminus \{q,w\}}(x_r-x_q) x_i \Big] \, + \Big[  \sum_{j \in K_n \setminus \{r,s,w\}}(x_p- x_s) x_j \Big] - x_s (x_r-x_q) \\
& =    \Big[ \sum_{i \in K_m \setminus \{p,q,w\}}(x_r-x_q) x_i \Big] \, + \,  x_p (x_r-x_q)\\
& \qquad \qquad	\qquad \qquad + \Big[  \sum_{j \in K_n \setminus \{r,s,w\}}(x_p- x_s) x_j \Big] - x_s (x_r-x_q) \\
& = (x_p-x_s) (x_r-x_q)\,  +  \Big[ \sum_{i \in K_m \setminus \{p,q,w\}}(x_r-x_q) x_i \Big] \, + \Big[  \sum_{j \in K_n \setminus \{r,s,w\}}(x_p- x_s) x_j \Big] \\
&>0.
\end{align*}

Thus, $\rho(G) < \rho (G^{**})$. It is easy to see that  $\mathcal{D}$ is also a maximal dissociation set for both $G^*$ and $G^{**}$ as well. This completes the proof.
\end{proof}

\begin{lem}\label{lem:Km-Kn_2}

Let $G\in \mathbf{Bl}(\textbf{k}, \varphi)$ and $\mathcal{D}$ be a maximal dissociation set of $G$. Let $\mathbf{A}$ be the adjacency matrix of $G$ and $(\rho(G),\mathbf{x})$ be an eigen-pair corresponding to the spectral radius $\rho(G)$. For $m,n \ge 4$, let $K_m \circledcirc K_n$ be an induced subgraph of $G$ joined via the cut vertex $w$ such that  $w \notin \mathcal{D}$, $K_m \cap \mathcal{D} =\{p,q \}$ and $K_n \cap \mathcal{D}=\{r \}$.

\begin{enumerate}
\item[(a)] Let $x_p\ge x_r$.  If $G^*$ is a block graph obtained from $G$ using the following operations (see Figure~\ref{fig:2.m,n>4}):
\begin{enumerate}
\item[$1.$] Delete  edge $rj$  for all  $j \in K_n \setminus \{r,w\}$,
\item[$2.$]  Add edge $ij$ for all $i \in K_m \setminus \{w\}$ and for all $j \in K_n \setminus \{r,w\}$,
\end{enumerate}
then  $G^*\in \mathbf{Bl}(\textbf{k}, \varphi)$ and $\rho(G) < \rho (G^*)$.

\item[(b)] Let $x_r > x_p,x_q$. If $G^{**}$ is a block graph obtained from $G$ using the following operations (see Figure~\ref{fig:2.m,n>4}):
\begin{enumerate}
\item[$1.$] Delete  edges $pi$ for all  $i \in K_m \setminus \{p,w\}$,
\item[$2.$]  Add edge  $ij$ for all $i \in K_m \setminus \{p,w\}$ and for all $j \in K_n \setminus \{w\}$,
\end{enumerate}
then  $G^{**}\in \mathbf{Bl}(\textbf{k}, \varphi)$ and $\rho(G) < \rho (G^{**})$.
\end{enumerate}
\end{lem}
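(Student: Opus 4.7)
The approach is identical to the template of Lemma~\ref{lem:Kn-Kn_1}: use the Perron vector $\mathbf{x}$ of $G$ as a test vector in the Rayleigh quotient for the modified graph. Since $\mathbf{A}\mathbf{x}=\rho(G)\mathbf{x}$, applying~\eqref{eqn:eq_sp_rd} gives
\begin{equation*}
\rho(G^{*})-\rho(G)\;\ge\;\frac{\mathbf{x}^{T}(\mathbf{A}^{*}-\mathbf{A})\mathbf{x}}{\mathbf{x}^{T}\mathbf{x}},
\end{equation*}
and similarly for $G^{**}$, so the spectral inequalities reduce to strict positivity of $\mathbf{x}^{T}(\mathbf{A}^{*}-\mathbf{A})\mathbf{x}$ (resp.\ $\mathbf{x}^{T}(\mathbf{A}^{**}-\mathbf{A})\mathbf{x}$).

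For part (a), I would account for the deleted edges $\{r,j\}$ with $j\in K_n\setminus\{r,w\}$ and the added edges $\{i,j\}$ with $i\in K_m\setminus\{w\}$, $j\in K_n\setminus\{r,w\}$, using the identity $\mathbf{x}^{T}\mathbf{A}\mathbf{x}=2\sum_{u\sim v}x_ux_v$. Collecting terms yields
\begin{equation*}
\tfrac{1}{2}\mathbf{x}^{T}(\mathbf{A}^{*}-\mathbf{A})\mathbf{x}=\sum_{j\in K_n\setminus\{r,w\}} x_j\Bigl(\sum_{i\in K_m\setminus\{w\}} x_i - x_r\Bigr).
\end{equation*}
Since $p\in K_m\setminus\{w\}$, Perron positivity combined with the hypothesis $x_p\ge x_r$ makes the inner bracket strictly positive, and $n\ge 4$ makes the outer sum non-empty, so the difference is strictly positive.

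For part (b), the parallel computation for the deletions $\{p,i\}$ with $i\in K_m\setminus\{p,w\}$ and the additions $\{i,j\}$ with $i\in K_m\setminus\{p,w\}$, $j\in K_n\setminus\{w\}$ gives
\begin{equation*}
\tfrac{1}{2}\mathbf{x}^{T}(\mathbf{A}^{**}-\mathbf{A})\mathbf{x}=\sum_{i\in K_m\setminus\{p,w\}} x_i\Bigl(\sum_{j\in K_n\setminus\{w\}} x_j - x_p\Bigr).
\end{equation*}
Since $r\in K_n\setminus\{w\}$ and $x_r>x_p$, the bracket exceeds $x_r-x_p>0$; the factor $\sum_{i\in K_m\setminus\{p,w\}} x_i$ is positive because $m\ge 4$ (in particular $q$ is in the index set). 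The auxiliary hypothesis $x_r>x_q$ is what justifies singling out $p$ (rather than $q$) as the vertex demoted to the pendant block; it is not used directly in the positivity estimate.

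The remaining task is to verify $G^{*},G^{**}\in\mathbf{Bl}(\mathbf{k},\varphi)$. The vertex set is preserved in both constructions, so the check reduces to showing that $\mathcal{D}$ is still a maximum dissociation set after the operation. Direct inspection shows that $\mathcal{D}$ remains a dissociation set of the new graph: in $G^{*}$ the vertex $r$ becomes a pendant of $w\notin\mathcal{D}$, so the induced subgraph on $\mathcal{D}$ is unchanged, while in $G^{**}$ the dissociation-edge $pq$ inside $\mathcal{D}$ is replaced by $qr$, with $p$ becoming a pendant of $w$. Maximality follows from the standard block-graph observation that a dissociation set meets each complete block in at most two vertices, which caps $|\mathcal{D}\cap(K_m\cup K_n)|$ at three in each new graph (two in the enlarged block plus one in the pendant), matching the bound for $G$. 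This combinatorial bookkeeping, rather than the spectral estimate, is the only point that requires genuine care; the spectral computation itself is a direct analogue of Lemma~\ref{lem:Kn-Kn_1}.
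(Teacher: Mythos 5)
Your proposal is correct and takes essentially the same route as the paper: test the Perron vector of $G$ in the Rayleigh quotient of the modified graph and show $\mathbf{x}^{T}(\mathbf{A}^{*}-\mathbf{A})\mathbf{x}>0$ (the paper carries this out explicitly for part (a) and declares part (b) analogous, while you supply both, in an equivalent factored form), and your justification that $\mathcal{D}$ remains a maximum dissociation set is at least as detailed as the paper's one-line assertion.
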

\begin{figure*}[ht]
	\centering
	\begin{tikzpicture}[scale=.6]
		\Vertex[x=1,y=-1,size=.1,label=$w$,position=left,color=red]{B}
		\Vertex[x=-0.6,y=-1,label=$K_{m}$,style={shape=coordinate}]{-1} 
		\Vertex[x=2.7,y=-1,label=$K_{n}$,style={shape=coordinate}]{-1}
		\Vertex[x=4.35,y=-1.0,size=.1,label=$r$,position=right,color=green]{E}
		\Vertex[x=-2.35,y=-1.5,size=.1,label=$q$,position=left,color=green]{F}
		\Vertex[x=-2.35,y=-0.5,size=.1,label=$p$,position=left,color=green]{G}

		\draw[black, thick, dash dot, use Hobby shortcut,closed](C)(-2,0)..(-2,-2)..(1,-1);
		
		\draw[black, thick, dash dot, use Hobby shortcut,closed](C)(4,0)..(1,-1)..(4,-2);
		\node at (1,-4) {$G$};
	\end{tikzpicture}\hspace{0.4cm}
	\begin{tikzpicture}[scale=.6]
		\Vertex[x=1,y=-1,size=.1,label=$w$,position=above,color=red]{B}
		\Vertex[x=-0.7,y=-1,label=$K_{m+n-3}$,style={shape=coordinate}]{-1} 
		\Vertex[x=4.35,y=-1.0,size=.1,label=$r$,position=right,color=green]{E}
		\Vertex[x=-2.35,y=-1.5,size=.1,label=$q$,position=left,color=green]{F}
		\Vertex[x=-2.35,y=-0.5,size=.1,label=$p$,position=left,color=green]{G}

		\Edge[lw=2pt](B)(E)

		\node at (1,-4) {$G^*$};
		\draw[black, thick, dash dot, use Hobby shortcut,closed](C)(-2,0)..(-2,-2)..(1,-1);	
	\end{tikzpicture}\hspace{0.4cm}
	\begin{tikzpicture}[scale=.6]
		\Vertex[x=1,y=-1,size=.1,label=$w$,position=above,color=red]{B}
		\Vertex[x=-0.7,y=-1,label=$K_{m+n-3}$,style={shape=coordinate}]{-1} 
		\Vertex[x=4.35,y=-1.0,size=.1,label=$p$,position=right,color=green]{E}
		\Vertex[x=-2.35,y=-1.5,size=.1,label=$q$,position=left,color=green]{F}
		\Vertex[x=-2.35,y=-0.5,size=.1,label=$r$,position=left,color=green]{G}

		\Edge[lw=2pt](B)(E)

	\node at (1,-4) {$G^{**}$};
	\draw[black, thick, dash dot, use Hobby shortcut,closed](C)(-2,0)..(-2,-2)..(1,-1);	
	\end{tikzpicture}
	\caption{ Graph operations in Lemma~\ref{lem:Km-Kn_2}}\label{fig:2.m,n>4}
\end{figure*}

\begin{proof} Using our hypothesis $m,n\geq 4$ gives us $K_m \setminus \{p,q,w\}$ and $K_n \setminus \{r,w\}$ are non empty sets. Let  $\mathbf{A}^*$ be the adjacency matrix of $G^*$. Using  $x_p\ge x_r$, we have 
\begin{align*}
     	\frac{1}{2} \mathbf{x}^T (\mathbf{A}^*-\mathbf{A}) \mathbf{x}   &= -\sum_{j \in K_n \setminus \{w\}} x_r  x_j + \sum_{i \in K_m \setminus \{w\}}  \sum_{j \in K_n \setminus \{r,w\}}x_i x_j\\
     	& > -  \sum_{j \in K_n \setminus \{w\}}x_r x_j +  \sum_{j \in K_n \setminus \{w\}} x_p x_j \\
     	&=  \sum_{j \in K_n \setminus \{w\}} (x_p - x_r) x_j\\
     	&\ge 0,
\end{align*}
which implies that $\rho(G) < \rho (G^*)$. By construction, $\mathcal{D}$ also a maximal dissociation set for both $G^*$ as well, this proves part $(a)$. The argument for the proof of part $(b)$ is similar to part $(a)$ and hence omitted.
\end{proof}


\begin{prop}\label{prop:2}
Let $\widehat{G}$ be a graph with maximal spectral radius in $\mathbf{Bl}(\textbf{k}, \varphi)$. Then, the following hold true.

\begin{itemize}
\item[$(i)$] For $m,n \geq 4$, if $K_m$ and $K_n$ are two blocks of $\widehat{G}$, then $K_m$ and $K_n$ are not adjacent.
\item [$(ii)$] If $K_2$ or $K_3$ is a block of $\widehat{G}$, then it is a pendant block.
\end{itemize}
\end{prop}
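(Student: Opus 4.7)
The plan is to treat the two parts separately, reducing each to the structural lemmas already in hand.

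For part $(i)$, I argue by contradiction: assume $K_m$ and $K_n$ (with $m,n\geq 4$) are adjacent blocks of $\widehat{G}$ sharing a cut vertex $w$. Fix a maximum dissociation set $\mathcal{D}$ of $\widehat{G}$ and let $\mathbf{x}$ be the Perron vector. Proposition~\ref{prop:1} forces $w\notin\mathcal{D}$, $|K_m\cap\mathcal{D}|,|K_n\cap\mathcal{D}|\in\{1,2\}$, and rules out both intersections being $1$. When $|K_m\cap\mathcal{D}|=|K_n\cap\mathcal{D}|=2$, I write $K_m\cap\mathcal{D}=\{p,q\}$ and $K_n\cap\mathcal{D}=\{r,s\}$, and—swapping the roles of $K_m$ and $K_n$ and relabeling within each pair if needed—arrange $x_p=\max\{x_p,x_q,x_r,x_s\}$ and $x_r\geq x_s$. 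Then Lemma~\ref{lem:Kn-Kn_1}(a) applies when $x_q\geq x_r$, and Lemma~\ref{lem:Kn-Kn_1}(b) applies when $x_r>x_q$; either branch produces a graph in $\mathbf{Bl}(\textbf{k},\varphi)$ whose spectral radius strictly exceeds $\rho(\widehat{G})$, contradicting maximality. The remaining sub-case (say $|K_m\cap\mathcal{D}|=2$, $|K_n\cap\mathcal{D}|=1$) is handled the same way using Lemma~\ref{lem:Km-Kn_2}(a) when $\max\{x_p,x_q\}\geq x_r$ and Lemma~\ref{lem:Km-Kn_2}(b) otherwise.

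For part $(ii)$, the $K_2$ case is immediate: a non-pendant $K_2$ has both endpoints as cut vertices, so by Proposition~\ref{prop:1}(ii) neither lies in $\mathcal{D}$, giving $|K_2\cap\mathcal{D}|=0$ and contradicting Proposition~\ref{prop:1}(i). For a non-pendant $K_3=\{u,v,w\}$, at least two vertices are cut vertices. If all three are, then $|K_3\cap\mathcal{D}|=0$, contradicting Proposition~\ref{prop:1}(i); otherwise, without loss of generality $u,v$ are the cut vertices, $w$ is not, and combining (i) and (ii) of Proposition~\ref{prop:1} yields $|K_3\cap\mathcal{D}|=1$ with $w\in\mathcal{D}$. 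Let $B_u$ (respectively $B_v$) be a block adjacent to $K_3$ at $u$ (respectively $v$). Applying Proposition~\ref{prop:1}(iii) to the pairs $(B_u,K_3)$ and $(B_v,K_3)$ excludes $|B_u\cap\mathcal{D}|=1$ and $|B_v\cap\mathcal{D}|=1$, so $|B_u\cap\mathcal{D}|=|B_v\cap\mathcal{D}|=2$, forcing $|B_u|,|B_v|\geq 3$.

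To close the $K_3$ case I would build a block graph $G'\in\mathbf{Bl}(\textbf{k},\varphi)$ with $\rho(G')>\rho(\widehat{G})$. The natural candidate is a Kelmans-type shift based on the Perron vector: assuming $x_u\geq x_v$, for every vertex $y$ lying in a block at $v$ distinct from $K_3$, delete the edge $vy$ and add the edge $uy$. This relocates each clique at $v$ to a clique at $u$, so $G'$ remains a block graph, $K_3$ is now pendant at $u$, and the direct calculation $\tfrac{1}{2}\mathbf{x}^T(\mathbf{A}(G')-\mathbf{A})\mathbf{x}=\sum_{y}(x_u-x_v)x_y>0$ gives $\rho(G')>\rho(\widehat{G})$. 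The main obstacle is verifying $\varphi(G')=\varphi$: although $\mathcal{D}$ remains a dissociation set of $G'$ (the induced edges on $\mathcal{D}$ are untouched because $u,v\notin\mathcal{D}$), the newly pendant $v$ has only $u\notin\mathcal{D}$ and $w\in\mathcal{D}$ as neighbors in $G'$, so $\mathcal{D}\cup\{v\}$ is also a dissociation set. The $\varphi$-preservation step therefore requires coupling the shift with a compensating modification—either an additional edge that prevents $v$ from being added to any maximum dissociation set while keeping $G'$ a block graph, or a local reshuffle of the relocated blocks at $u$ that exchanges the gain at $v$ for a loss elsewhere—after which $G'\in\mathbf{Bl}(\textbf{k},\varphi)$ with $\rho(G')>\rho(\widehat{G})$ contradicts the maximality of $\widehat{G}$.
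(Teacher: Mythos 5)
Part $(i)$ and the $K_2$ half of part $(ii)$ are correct and follow essentially the paper's route: the reduction via Proposition~\ref{prop:1} to the intersection patterns $(2,2)$ and $(2,1)$, and the case split on the Perron entries that feeds into Lemma~\ref{lem:Kn-Kn_1}(a)/(b) and Lemma~\ref{lem:Km-Kn_2}(a)/(b), is exactly what the paper leaves implicit when it says part $(i)$ ``follows from'' those lemmas; your spelling-out of which sub-lemma applies in which regime is accurate. The setup for the non-pendant $K_3$ (exactly two cut vertices $u,v$, $w\in\mathcal{D}$, $|B_u\cap\mathcal{D}|=|B_v\cap\mathcal{D}|=2$, hence $|B_u|,|B_v|\geq 3$) also matches the paper.

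The gap is in closing the $K_3$ case, and it is exactly the one you flag yourself: the Kelmans shift of all blocks from $v$ to $u$ does \emph{not} stay inside $\mathbf{Bl}(\textbf{k},\varphi)$, because afterwards $v$ is a degree-two vertex whose only neighbours are $u\notin\mathcal{D}$ and $w\in\mathcal{D}$, with $w$ itself having no other neighbour in $\mathcal{D}$; so $\mathcal{D}\cup\{v\}$ is a dissociation set and $\varphi$ strictly increases. Saying that this ``requires coupling the shift with a compensating modification'' without exhibiting one is not a proof — the compensating move is the entire difficulty, and it is not clear that any patch of the shifted graph works (adding an edge at $v$ inside a block graph means merging $v$ into another block, which changes that block's contribution to $\mathcal{D}$ and has to be re-audited). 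The paper resolves this with a different, $\varphi$-preserving surgery, split into two cases according to the Perron vector. If $x_j<x_w$ for all $j\in K_n\setminus\{v\}$ (writing $K_n=B_v$, $K_n\cap\mathcal{D}=\{p,q\}$), one detaches $p$ as a pendant $K_2$ at $u$ and merges $(K_n\setminus\{p\})\cup\{u,w\}$ into a single block, so the old contributions $1+2$ from $K_3$ and $K_n$ become $2+1$ from the new block and the pendant edge; the Rayleigh-quotient increase uses $x_u\geq x_v$ and $x_w>x_p$. If instead $x_{j_0}\geq x_w$ for some $j_0\in K_n\setminus\{v\}$, one deletes $vw$ and merges $K_n\cup\{u\}$, leaving $\{u,w\}$ as a pendant $K_2$; again the count $2+1$ is preserved and $x_ux_{j_0}\geq x_vx_w$ drives the strict inequality. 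You need an argument of this kind — one that simultaneously certifies the spectral increase and the invariance of $\varphi$ — rather than the bare shift.
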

\begin{proof}
Let $\widehat{G}$ be a graph with maximal spectral radius in $\mathbf{Bl}(\textbf{k}, \varphi)$ and $\mathcal{D}$ be a maximal dissociation set of $\widehat{G}$. Part $(i)$ follows from Lemmas~\ref{lem:Kn-Kn_1} and~\ref{lem:Km-Kn_2}. 

Next, let an edge $e$ $(=K_2)$ on vertices $u$ and $v$ be a block in $\widehat{G}$. If $e$ is not a pendant block, then both the vertices $u$ and $v$ are cut vertices. Since $\widehat{G}$ is a graph with maximal spectral radius in $\mathbf{Bl}(\textbf{k}, \varphi)$, by part $(ii)$ of  Proposition~\ref{prop:1}, we get $u,v \notin \mathcal{D}$ {\it i.e.,} $e \cap \mathcal{D}=\emptyset$. Hence part $(i)$ of Proposition~\ref{prop:1} leads to a contradiction. 

Finally, let a triangle $T$ $(=K_3)$ on vertices $u$, $v$ and $w$  be a block in $\widehat{G}$. If $T$ is not a pendant block, then by part $(i)$ and $(ii)$ of Proposition~\ref{prop:1}, exactly one of the vertices of $T$ is not a cut vertex in $\widehat{G}$. Without loss of generality, let us assume $w$ is not a cut vertex, $K_m$ and $K_n$ be two blocks of $\widehat{G}$ adjacent to $T$ with cut vertex $u$ and $v$, respectively. Since $\widehat{G}$ is a graph with maximal spectral radius in $\mathbf{Bl}(\textbf{k}, \varphi)$, using part $(i)$ of Proposition~\ref{prop:1}, we get $u,v\notin \mathcal{D}$. Hence   $w\in \mathcal{D}.$ Therefore, using  $|T\cap \mathcal{D}|=1$,  part $(i)$ and $(iii)$ of Proposition~\ref{prop:1} implies that $|K_m \cap \mathcal{D}|=|K_n\cap \mathcal{D}|=2$. Since $u,v \notin \mathcal{D}$, we get $m,n \geq 3.$

 Let $\widehat{\mathbf{A}}$ be the adjacency matrix of $\widehat{G}$ and $(\rho(\widehat{G}),\mathbf{x})$ be an eigen-pair corresponding to the spectral radius $\rho(\widehat{G})$.  Without loss of generality, let $x_u \geq x_v.$  We consider the following two cases to complete the proof.\\

\noindent {\bf \underline{Case $1$}:} $x_j<x_w$ for all $j \in K_n \setminus \{v\} $.\\

Since $|K_n\cap \mathcal{D}|=2,$ let $K_n\cap \mathcal{D}= \{p,q\}$.  Let  $G^{*}$ be a block graph obtained from $\widehat{G}$ using the following operations (see Figure~\ref{fig:case1_prop2}):
\begin{enumerate}
\item[$1.$] Delete  edge $pj$ for all $j \in K_n \setminus \{p\}$.
\item[$2.$] Add an edge $pu$.
\item[$3.$] Add edge $uj$ for all $j \in K_n \setminus \{v\}$ and $wj$ for all $j \in K_n \setminus \{p,v\}$.
\end{enumerate}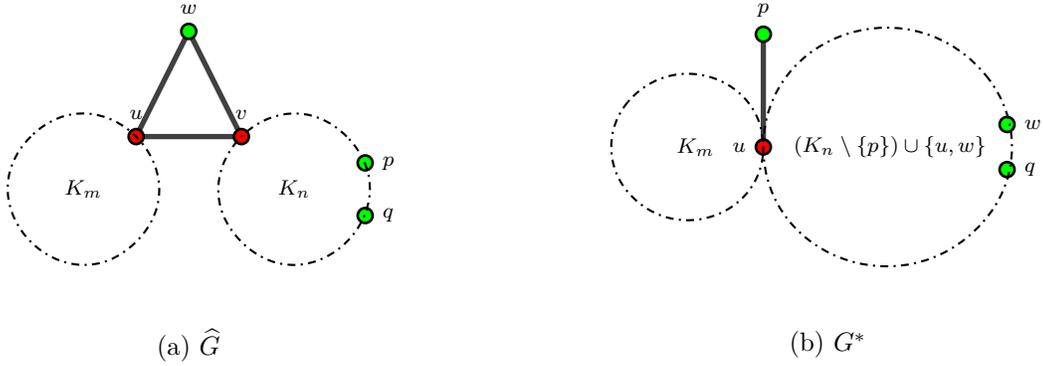
\begin{figure*}[ht]
\centering
\begin{subfigure}[t]{0.5\textwidth}
\centering
\begin{tikzpicture}[scale=.7]
\Vertex[size=.1,label=$u$,position=above,color=red]{A}
\Vertex[x=1,y=2,size=.1,label=$w$,position=above,color=green]{B}
\Vertex[x=2,y=0,size=.1,label=$v$,position=above,color=red]{C}
\Vertex[x=-1,y=-1,label=$K_{m}$,style={shape=coordinate}]{-1}
\Vertex[x=3,y=-1,label=$K_{n}$,style={shape=coordinate}]{-1}

\Vertex[x=4.35,y=-1.5,size=.1,label=$q$,position=right,color=green]{D}

\Vertex[x=4.35,y=-0.5,size=.1,label=$p$,position=right,color=green]{E}

\Edge[lw=2pt](A)(B)
\Edge[lw=2pt](A)(C)
\Edge[lw=2pt](B)(C)

\draw[black, thick, dash dot, use Hobby shortcut,closed](C)(-2,0)..(0,-2);

\draw[black, thick, dash dot, use Hobby shortcut,closed](C)(4,0)..(2,-2);

\end{tikzpicture}
\caption{$\widehat{G}$}
\end{subfigure}%
\begin{subfigure}[t]{0.5\textwidth}
\centering
\begin{tikzpicture}[scale=.6]
\Vertex[x=1,y=-1,size=.1,label=$u$,position=left,color=red]{B}
\Vertex[x=1,y=1.5,size=.1,label=$p$,position=above,color=green]{C}
\Vertex[x=-0.5,y=-1,label=$K_{m}$,style={shape=coordinate}]{-1}

\Vertex[x=6.4,y=-1.5,size=.1,label=$q$,position=right,color=green]{D}
\Vertex[x=6.4,y=-0.5,size=.1,label=$w$,position=right,color=green]{E}

\Edge[lw=2pt](B)(C)

\node at (3.8,-1) {\scriptsize{$(K_{n}\setminus \{p\}) \cup \{u,w\}$}};

\draw[black, thick, dash dot, use Hobby shortcut,closed](C)(-2,0)..(-2,-2)..(1,-1);

\draw[black, thick, dash dot, use Hobby shortcut,closed](C)(6,0.5)..(1,-1)..(6,-2.5);
\end{tikzpicture}
\caption{$G^*$}
\end{subfigure}
\caption{Graph operation in Case $1$ of Proposition~\ref{prop:2}}\label{fig:case1_prop2}
\end{figure*}

By construction, $\mathcal{D}$ is also a maximal dissociation set of $G^*$ and hence $G^* \in \mathbf{Bl}(\textbf{k}, \varphi)$.   Let $\mathbf{A}^*$ be the adjacency matrix of $G^*$. Then,
\begin{align}\label{eqn:p2_1}
\frac{1}{2} \mathbf{x}^T (\mathbf{A}^*-\widehat{\mathbf{A}}) \mathbf{x} &= -x_px_v -\sum_{j \in K_n \setminus \{p,v\}} x_p x_j  + x_px_u \, + \sum_{j \in K_n \setminus \{v\}} x_u x_j \, +  \sum_{j \in K_n \setminus \{p,v\}}x_w x_j \nonumber \\ 
&>  -x_px_v - \sum_{j \in K_n \setminus \{p,v\}}x_p x_j \,  + x_px_u \, +  \sum_{j \in K_n \setminus \{p,v\}}x_w x_j \nonumber\\
&= x_p(x_u-x_v) + \sum_{j \in K_n \setminus \{p,v\}} (x_w-x_p) x_j. 
\end{align}
Since $x_j<x_w$ for all $j \in K_n \setminus \{v\} $, in particular $x_p < x_w$. Thus, using assumption $x_u \geq x_v$, Eqn.~\eqref{eqn:p2_1} yields that $\frac{1}{2} \mathbf{x}^T (\mathbf{A}^*-\widehat{\mathbf{A}}) \mathbf{x}>0.$ Hence $\rho(\widehat{G}) <  \rho(G^*)$. This leads to a contradiction to the maximality of $\widehat{G}$.\\

\noindent {\bf \underline{Case $2$}:} $x_j\geq x_w$ for some $j \in K_n \setminus \{v\} $.\\

 Using assumption, $x_j\geq x_w$ for some $j \in K_n \setminus \{v\} $, let $j_0 \in K_n \setminus \{v\}$ such that $x_{j_0}\geq x_w$. Moreover,  $n\geq 3$ gives  $|K_n \setminus \{v\}|\geq 2$, and hence $K_n \setminus \{j_0,v\}$ is non empty.\\

\begin{figure*}[ht]
	\centering
	\begin{subfigure}[t]{0.5\textwidth}
		\centering
		\begin{tikzpicture}[scale=.7]
			\Vertex[size=.1,label=$u$,position=above,color=red]{A}
			\Vertex[x=1,y=2,size=.1,label=$w$,position=above,color=green]{B}
			\Vertex[x=2,y=0,size=.1,label=$v$,position=above,color=red]{C} 
			\Vertex[x=-1,y=-1,label=$K_{m}$,style={shape=coordinate}]{-1} 
			\Vertex[x=3,y=-1,label=$K_{n}$,style={shape=coordinate}]{-1}
			
			\Edge[lw=2pt](A)(B)
			\Edge[lw=2pt](A)(C)
			\Edge[lw=2pt](B)(C)
			
			\draw[black, thick, dash dot, use Hobby shortcut,closed](C)(-2,0)..(0,-2);
			
			\draw[black, thick, dash dot, use Hobby shortcut,closed](C)(4,0)..(2,-2);
			
		\end{tikzpicture}
		\caption{$\widehat{G}$}
	\end{subfigure}%
	\begin{subfigure}[t]{0.5\textwidth}
		\centering
		\begin{tikzpicture}[scale=.6]
			\Vertex[x=1,y=-1,size=.1,label=$u$,position=left,color=red]{B}
			\Vertex[x=1,y=1.5,size=.1,label=$w$,position=above,color=green]{C} 
			\Vertex[x=-0.5,y=-1,label=$K_{m}$,style={shape=coordinate}]{-1} 
			\Vertex[x=3.1,y=-1,label=$K_{n} \cup \{u\}$,style={shape=coordinate}]{-1}
			
			\Edge[lw=2pt](B)(C)
			
			\draw[black, thick, dash dot, use Hobby shortcut,closed](C)(-2,0)..(-2,-2)..(1,-1);
			
			\draw[black, thick, dash dot, use Hobby shortcut,closed](C)(5,0)..(1,-1)..(5,-2);
		\end{tikzpicture}
		\caption{$G^{**}$}
	\end{subfigure}
	\caption{Graph operation in Case~$2$ of Proposition~\ref{prop:2}}\label{fig:case2_prop2}
\end{figure*} 

\noindent Let  $G^{**}$ be a block graph obtained from $\widehat{G}$  using the following operations (see Figure~\ref{fig:case2_prop2}):
\begin{enumerate}
\item[$1.$] Delete the edge $vw$.
\item[$2.$]  Add edge $uj$ for all $j \in K_n \setminus \{v\}$.
\end{enumerate}
By construction, $G^{**} \in \mathbf{Bl}(\textbf{k}, \varphi)$. Let $\mathbf{A}^{**}$ be the adjacency matrix of $G^{**}$. Then,
\begin{align}\label{eqn:p2_2}
\frac{1}{2} \mathbf{x}^T (\mathbf{A}^{**}-\widehat{\mathbf{A}}) \mathbf{x} &= -x_w x_v + \sum_{j \in K_n \setminus \{v\}} x_u x_j \nonumber \\
&=  -x_wx_v + x_u x_{j_0}+ \sum_{j \in K_n \setminus \{j_0, v\}}x_u x_j.
\end{align}
 Using $x_u \geq x_v$ and $x_{j_0}\geq x_w$, we get $x_u x_{j_0} \geq x_wx_v$. Therefore, using $K_n \setminus \{j_0,v\}$ is non empty in  Eqn.~\eqref{eqn:p2_2} gives $\frac{1}{2} \mathbf{x}^T (\mathbf{A}^{**}-\widehat{\mathbf{A}}) \mathbf{x}>0.$  Hence $\rho(\widehat{G}) <  \rho(G^{**})$, which is a contradiction.\\

From the above cases, it is clear that we reach a contradiction due to our assumption that triangle $T$ is not a pendant block. This completes the proof.
\end{proof}

\begin{cor}\label{cor:max_1}
If $\widehat{G}$ is a graph with maximal spectral radius in $\mathbf{Bl}(\textbf{k}, \varphi)$, then $\widehat{G}$ has a block $B$ such that every other block of $\widehat{G}$ is adjacent to $B$. 
\end{cor}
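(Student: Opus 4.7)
The plan is to translate the statement into the language of the block tree of $\widehat{G}$---the bipartite tree whose nodes are the blocks and cut vertices of $\widehat{G}$, with a block node adjacent (in the block tree) to each cut vertex it contains---so that the goal becomes to produce one block node $B$ at block-tree distance exactly $2$ from every other block node. The case where $\widehat{G}$ consists of a single block is immediate, so I assume throughout that $\widehat{G}$ has at least two blocks, whence every block contains at least one cut vertex.

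The central combinatorial step will be the claim that at most one block of $\widehat{G}$ can contain two or more cut vertices. To prove it by contradiction, I suppose two such blocks $B$ and $B'$ exist. By part~$(ii)$ of Proposition~\ref{prop:2}, every $K_2$ or $K_3$ block of $\widehat{G}$ is pendant and thus has only one cut vertex, so both $B$ and $B'$ must be cliques $K_n$ with $n\ge 4$. I then consider the unique block-tree path
\begin{equation*}
B = B_0,\; v_1,\; B_1,\; v_2,\; \ldots,\; v_t,\; B_t = B'.
\end{equation*}
If $t=1$, then $B$ and $B'$ already share the cut vertex $v_1$, directly contradicting part~$(i)$. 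If $t\ge 2$, then the intermediate block $B_1$ contains both $v_1$ and $v_2$, hence has at least two cut vertices; the same argument then forces $B_1$ to be a $K_n$ with $n\ge 4$, and now $B$ and $B_1$ are two adjacent large blocks, again contradicting part~$(i)$.

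With this claim in hand, I split into two cases. If $\widehat{G}$ has a (necessarily unique) block $B^*$ with $\ge 2$ cut vertices, I take any other block $B$---which has a unique cut vertex $v$---and examine the block-tree path from $B$ to $B^*$. Any length exceeding $2$ would force an intermediate block distinct from $B^*$ to also contain $\ge 2$ cut vertices, contradicting uniqueness; so the path has length $2$, giving $v \in B^*$ and hence $B$ adjacent to $B^*$. If instead no block contains $\ge 2$ cut vertices, every block node of the block tree is a leaf, while every cut-vertex node has degree at least $2$; since two distinct cut-vertex nodes cannot be joined in the block tree through any path whose internal nodes would have to be block leaves, there can be only one cut vertex, shared by every block, and any block then serves as $B$.

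The main obstacle is, I believe, just the block-tree path argument in the central claim: one must observe that an intermediate block on the tree path necessarily carries two cut vertices of $\widehat{G}$, which together with parts~$(i)$ and~$(ii)$ of Proposition~\ref{prop:2} closes off every possibility. Once that observation is in place, the structural dichotomy and the identification of $B$ follow essentially for free.
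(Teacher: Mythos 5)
Your proof is correct and uses essentially the same mechanism as the paper: parts $(i)$ and $(ii)$ of Proposition~\ref{prop:2} force any two adjacent non-pendant blocks to be cliques of size at least $4$, which is forbidden. The paper compresses the combinatorial step (``if the conclusion fails, two such adjacent blocks exist'') into a single sentence, whereas you make it explicit via the block-tree path argument; this is a more careful write-up of the same proof rather than a different route.
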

\begin{proof}
 Using $K_2$ and $K_3$ can only be a pendant block in $\widehat{G}$, if this result is not true, then there exist two adjacent blocks $ K_n$ and $K_m$, where $n,m\geq 4$. Which is a contradiction to part $(i)$ of Proposition~\ref{prop:2}. This completes the proof.
\end{proof}

Using the above results and properties of a graph with maximal spectral radius in $\mathbf{Bl}(\textbf{k}, \varphi)$, we now prove that a graph with maximal spectral radius in $\mathbf{Bl}(\textbf{k}, \varphi)$ has a central cut vertex.

\begin{prop}\label{prop:3}
If $\varphi>2$ and $\widehat{G}$ is a graph with maximal spectral radius in $\mathbf{Bl}(\textbf{k}, \varphi)$, then $\widehat{G}$ is block graph with a central cut vertex.
\end{prop}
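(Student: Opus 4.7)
The plan is a contradiction argument built on Corollary~\ref{cor:max_1}, which gives a block $B$ of $\widehat{G}$ such that every other block of $\widehat{G}$ is adjacent to $B$; the task is to show that all these blocks attach to $B$ at a single common vertex of $V(B)$. I split on $|V(B)|$. If $|V(B)|\in\{2,3\}$, Proposition~\ref{prop:2}(ii) forces $B$ to be pendant. Since $\varphi>2$ rules out $\widehat{G}\cong K_n$, $B$ contains a unique cut vertex $v^{*}$; every other block is adjacent to $B$ and hence contains $v^{*}$, so $v^{*}$ is central.

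When $|V(B)|\geq 4$, Proposition~\ref{prop:2}(i),(ii) force every other block to be a pendant $K_2$ or $K_3$. Suppose for contradiction that $V(B)$ has at least two distinct cut vertices, pick two such vertices $v_1,v_2$ with $x_{v_1}\geq x_{v_2}$ (where $\mathbf{x}$ is the Perron vector), and apply Lemma~\ref{lem:cut-sift} to shift every pendant block at $v_2$ over to $v_1$, producing $G^{*}$. Strict inequality $\rho(G^{*})>\rho(\widehat{G})$ follows because if $\mathbf{x}$ were also a Perron vector of $G^{*}$, the $v_2$-coordinate of $(\mathbf{A}-\mathbf{A}^{*})\mathbf{x}=0$ would force $\sum_{i\in U}x_i=0$, impossible by positivity. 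Let $\mathcal{D}$ be a maximum dissociation set of $\widehat{G}$, and let $p,q$ count pendant $K_2$'s and pendant $K_3$'s. Using Proposition~\ref{prop:1}(ii) to keep cut vertices out of $\mathcal{D}$, I obtain the decomposition $\varphi=p+2q+|B\cap\mathcal{D}|$ with $|B\cap\mathcal{D}|\in\{1,2\}$, and Proposition~\ref{prop:1}(iii) upgrades this to $|B\cap\mathcal{D}|=2$ whenever $p\geq 1$. The same decomposition applied to $G^{*}$ gives $\varphi(G^{*})\leq p+2q+2$, with equality achievable since $V(B)$ now has one extra non-cut vertex; so in the generic subcase $|B\cap\mathcal{D}|=2$ we have $\varphi(G^{*})=\varphi$, giving the desired contradiction.

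The main obstacle is the remaining subcase $|B\cap\mathcal{D}|=1$, which forces $p=0$ (every pendant is a $K_3$) and leaves $V(B)$ with exactly one non-cut vertex. Here the naive shift bumps $\varphi(G^{*})$ up to $\varphi+1$, so $G^{*}$ leaves the class. I plan to handle this by following the shift with a local surgery: iterate Lemma~\ref{lem:cut-sift} to consolidate all pendant $K_3$'s at $v_1$, producing a graph $\widetilde{G}$ in which $v_1$ is the unique cut vertex of $V(B)$; then pick one pendant $K_3=\{v_1,a,b\}$ and form $G'$ by adding every edge $v_j a$ for $v_j\in V(B)\setminus\{v_1\}$ (so that $a$ is absorbed into $B$, promoting it to $K_{|V(B)|+1}$) and deleting the edge $ab$ (so that $\{v_1,b\}$ becomes a pendant $K_2$). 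The new block composition is $K_{|V(B)|+1}$ together with $q-1$ pendant $K_3$'s and one pendant $K_2$, all sharing the cut vertex $v_1$, and a direct dissociation-set count gives $\varphi(G')=2(q-1)+1+2=\varphi$, placing $G'\in\mathbf{Bl}(\textbf{k},\varphi)$.

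To close the contradiction I must show $\rho(G')>\rho(\widetilde{G})$. By Corollary~\ref{cor:auto} and Lemma~\ref{lem:comp}, the Perron vector of $\widetilde{G}$ satisfies $x_{v_j}=x_V$ for every $j\in\{2,\dots,|V(B)|\}$ and $x_a=x_b=x_L$ for every pendant $K_3$. Writing $m=|V(B)|$ and taking the symmetric edge difference,
\[
\tfrac{1}{2}\mathbf{x}^T(\mathbf{A}'-\widetilde{\mathbf{A}})\mathbf{x}=(m-1)x_Vx_L-x_L^2=x_L\bigl[(m-1)x_V-x_L\bigr].
\]
The Perron-eigenvalue equations $(\rho-1)x_L=x_{v_1}$ and $(\rho-m+2)x_V=x_{v_1}$ reduce the required inequality $(m-1)x_V>x_L$ to $(m-2)\rho>1$, which holds since $\rho\geq\rho(K_m)=m-1\geq 3$. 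Thus $\rho(G')>\rho(\widetilde{G})>\rho(\widehat{G})$, contradicting the maximality of $\widehat{G}$ and completing the proof.
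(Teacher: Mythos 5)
Your proof is correct and follows the same skeleton as the paper's (Corollary~\ref{cor:max_1}, a split on $|B\cap\mathcal{D}|\in\{1,2\}$, Lemma~\ref{lem:cut-sift}, and a ``absorb one leaf of a pendant $K_3$ into $B$, leaving a pendant $K_2$'' surgery), but the two technical steps are executed differently, and both versions work. In the $|B\cap\mathcal{D}|=2$ case the paper consolidates \emph{all} blocks at one cut vertex and derives strictness by comparing the eigen-equations at a cut vertex $w_2$ and a non-cut vertex $w_n$ of $B$ via Lemma~\ref{lem:comp}; you instead perform a single shift and read off strictness from the $v_2$-coordinate alone, which is more direct. (One nitpick: writing ``$(\mathbf{A}-\mathbf{A}^{*})\mathbf{x}=0$'' tacitly assumes $\rho(\widehat{G})=\rho(G^{*})$; the clean statement is that if the Perron vector of $G^{*}$ were a Perron vector of $\widehat{G}$, then $\rho(\widehat{G})x_{v_2}=(\mathbf{A}\mathbf{x})_{v_2}=(\mathbf{A}^{*}\mathbf{x})_{v_2}+\sum_{i\in U}x_i=\rho(G^{*})x_{v_2}+\sum_{i\in U}x_i>\rho(G^{*})x_{v_2}$, contradicting $\rho(\widehat{G})\le\rho(G^{*})$.) In the $|B\cap\mathcal{D}|=1$ case the paper avoids your preliminary consolidation: it performs the leaf-absorption directly on $\widehat{G}$ at the cut vertex $w_1$ of $B$ with \emph{minimal} Perron entry, and gets positivity of the Rayleigh difference from $x_u<x_{w_1}\le x_{w_2}$ (Lemma~\ref{lem:comp} plus the ordering), with no eigen-equation computation. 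Your consolidate-first variant costs an extra application of Lemma~\ref{lem:cut-sift} but buys the exact ratios $x_V/x_{v_1}=(\rho-m+2)^{-1}$ and $x_L/x_{v_1}=(\rho-1)^{-1}$, reducing everything to $(m-2)\rho>1$; note you only need $\rho(\widetilde{G})\ge\rho(\widehat{G})$ there, so the strictness you claim for the middle inequality is unnecessary. Your bookkeeping $\varphi=p+2q+|B\cap\mathcal{D}|$ and the verification that $\varphi(G^{*})=\varphi(G')=\varphi$ are sound, since for any dissociation set $|B\cap\mathcal{D}'|\le 2$ and $|P^{\circ}\cap\mathcal{D}'|\le|P^{\circ}|$ for each pendant block $P$.
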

\begin{proof}
Let $\widehat{G}$ be a graph with maximal spectral radius in $\mathbf{Bl}(\textbf{k}, \varphi)$ and $\mathcal{D}$ be a maximal dissociation set of $\widehat{G}$. Using Corollary~\ref{cor:max_1}, let $B$ be a block of $\widehat{G}$ such that every other block of $\widehat{G}$ is adjacent to $B$. By part $(ii)$  of Proposition~\ref{prop:2}, $K_2$ and $K_3$ can only be a pendant block in $\widehat{G}$. Thus, if $B$ is $K_2$ or $K_3$, then we are done. 

Next, let $B=K_n$, where $n\geq 4$. Suppose $\widehat{G}$ have more than one cut vertex. We now present two observations on $\widehat{G}$.
\begin{itemize}
\item[(a)] Using $\varphi >2$, $\widehat{G}$ has at least one more block other than $B$. Since  $B$ is adjacent to every other block of $\widehat{G}$, part $(i)$ of Proposition~\ref{prop:2} implies that the blocks other than $B$ can only be $K_2$ or $K_3.$ 

\item[(b)] By part $(ii)$ of Proposition~\ref{prop:1}, the cut vertices of $\widehat{G}$ are not in $\mathcal{D}$, and hence the elements of $B\cap \mathcal{D}$ are not cut vertices. Furthermore, if $K_2$ is a block, then $|K_2 \cap \mathcal{D}|=1$, and if $K_3$ is a block, then $|K_3\cap \mathcal{D}|=2$.
\end{itemize}
 By Proposition~\ref{prop:1}, $B \cap \mathcal{D} $ is non-empty,  and its elements are not cut vertices of $\widehat{G}$. Thus, we now consider the cases $|B \cap \mathcal{D}|=1$ and $|B \cap \mathcal{D}|=2$ to complete the proof.\\

\noindent {\bf \underline{Case $1$}:} $|B \cap \mathcal{D}|=1$\\

In this case,  if $K_m$ is a block adjacent to $B$, then $m=3.$  Otherwise, if $m=2$,  then $\widehat{G}$ has two adjacent blocks $B$ and $K_m$ such that $|B \cap \mathcal{D}|=1$ and $|K_2 \cap \mathcal{D}|=1$. Thus, by part $(iii)$ of Proposition~\ref{prop:1}, $\widehat{G}$ is not a graph with maximal spectral radius in $\mathbf{Bl}(\textbf{k},\varphi)$, which is a contradiction to our hypothesis. Therefore, all the blocks of $\widehat{G}$ other than $B$ are $K_3$.

Since $|B\cap \mathcal{D}|=1$, all the vertices of $B$ except one are cut vertices. For $n\geq 4$, let $B=K_n$ be on vertices $w_1,w_2,,\ldots,w_n.$ Without loss of generality, let $w_1,w_2,,\ldots,w_{n-1}$ be cut vertices and $w_n\in \mathcal{D}$ such that
\begin{equation}\label{eqn:cut_order}
x_{w_1}\leq x_{w_2}\leq \cdots \leq x_{w_{n-1}}.
\end{equation}

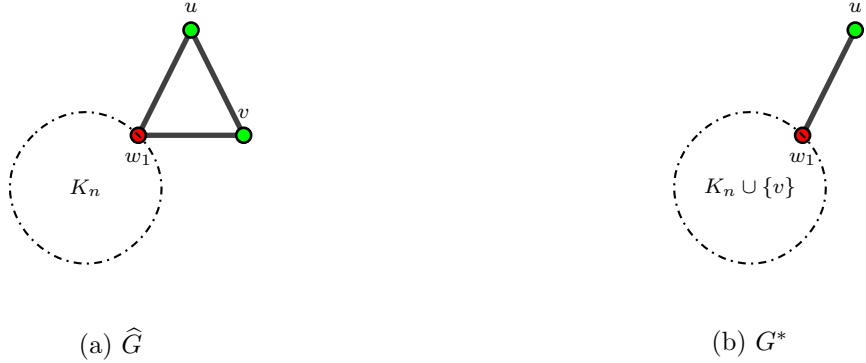
\begin{figure*}[ht]
	\centering
	\begin{subfigure}[t]{0.5\textwidth}
		\centering
		\begin{tikzpicture} [scale=.7]
			\Vertex[size=.1,label=$w_1$,position=below,color=red]{A}
			\Vertex[x=1,y=2,size=.1,label=$u$,position=above,color=green]{B}
			\Vertex[x=2,y=0,size=.1,label=$v$,position=above,color=green]{C} 
			\Vertex[x=-1,y=-1,label=$K_{n}$,style={shape=coordinate}]{-1}

			\Edge[lw=2pt](A)(B)
			\Edge[lw=2pt](A)(C)
			\Edge[lw=2pt](B)(C)
			
			\draw[black, thick, dash dot, use Hobby shortcut,closed](C)(-2,0)..(0,-2);
			
			
		\end{tikzpicture}
		\caption{$\widehat{G}$}
	\end{subfigure}%
	\begin{subfigure}[t]{0.5\textwidth}
		\centering
		\begin{tikzpicture} [scale=.7]
			\Vertex[size=.1,label=$w_1$,position=below,color=red]{A}
			\Vertex[x=1,y=2,size=.1,label=$u$,position=above,color=green]{C} 
			\Vertex[x=-1.0,y=-1,label=$K_{n} \cup \{v\}$,style={shape=coordinate}]{-1} 
			
			\Edge[lw=2pt](A)(C)
			
			\draw[black, thick, dash dot, use Hobby shortcut,closed](C)(-2,0)..(0,-2);
			
		\end{tikzpicture}
		\caption{$G^*$}
	\end{subfigure}
	\caption{ Graph operation in Case~$1$ of Proposition~\ref{prop:3} } \label{fig:Case1_prop3}
\end{figure*}

Let $K_3$ be a pendant block on vertices $u,v$ and $w_1$, and it is adjacent to  $B$ via the cut vertex $w_1$. Using Lemma~\ref{lem:comp}, we have 
\begin{equation}\label{eqn:p3_0}
x_u=x_v \mbox{ and } x_{w_1}> x_u.
\end{equation}
Let  $G^{*}$ be a block graph obtained from $\widehat{G}$ using the following operations  (see Figure~\ref{fig:Case1_prop3}):
\begin{enumerate}
\item[$1.$] Delete the edge $uv$.
\item[$2.$] Add  edge $vj$ for all $j \in K_n \setminus \{w_1\}$. 
\end{enumerate}
By construction, $\mathcal{D}$ is also a maximal dissociation set of $G^*$ and hence $G^* \in \mathbf{Bl}(\textbf{k}, \varphi)$.   Let $\mathbf{A}^*$ be the adjacency matrix of $G^*$. 

\begin{align}\label{eqn:p3_1}
\frac{1}{2} \mathbf{x}^T (\mathbf{A}^*-\widehat{\mathbf{A}}) \mathbf{x} &= -x_ux_v + \sum_{j \in K_n \setminus \{w_1\}} x_v x_j  \nonumber \\ 
&= -x_ux_v + x_vx_{w_2}+\sum_{j \in K_n \setminus \{w_1,w_2\}} x_v x_j \nonumber\\
&= x_v(x_{w_2}-x_u) +\sum_{j \in K_n \setminus \{w_1,w_2\}} x_v x_j. 
\end{align}
Using   Eqns.~\eqref{eqn:cut_order} and~\eqref{eqn:p3_0},  we get   $x_{w_1}\leq x_{w_2}$ and $x_u < x_{w_1}$, respectively.  Hence $ x_{w_2}-x_u > 0$. Furthermore, $K_n \setminus \{w_1,w_2\}$ is  non-empty and hence the Eqn.~\eqref{eqn:p3_1} gives us $\frac{1}{2} \mathbf{x}^T (\mathbf{A}^*-\widehat{\mathbf{A}}) \mathbf{x} >0$. Therefore, $\rho(\widehat{G})<\rho( G^*)$. This is a contradiction  to our hypothesis that $\widehat{G}$ is a maximal graph.\\

\noindent {\bf \underline{Case $2$}:} $|B \cap \mathcal{D}|=2$\\

Depending on the number of cut vertices, by repeated application of  Lemma~\ref{lem:cut-sift} on $\widehat{G}$, we find a block graph $G^*$ with exactly one cut vertex such that  
\begin{equation}\label{eqn:p3_2}
\rho(\widehat{G}) \leq \rho(G^*).
\end{equation}
Since elements of  $B \cap \mathcal{D}$  are not cut vertices, and $|B\cap \mathcal{D}|=2$, under the graph operation used in Lemma~\ref{lem:cut-sift}, it is easy to see that $\mathcal{D}$ is a maximal dissociation set of $G^*$. Hence $G^* \in  \mathbf{Bl}(\textbf{k}, \varphi)$. 

Observe that, using graph operation in Lemma~\ref{lem:cut-sift}, we obtained $G^* \in \mathbf{Bl}(\textbf{k}, \varphi)$ with exactly one cut vertex from $\widehat{G} \in \mathbf{Bl}(\textbf{k}, \varphi)$ with two or more cut vertices. In this process, we reduce one cut vertex at a time from the given set of cut vertices in $\widehat{G}$. Moreover, we have applied these graph operations in such a way that the blocks in $\widehat{G}$ and $G^*$ are the same, but the two graphs only defer how these blocks are arranged with respect to the cut vertices. Thus,  $B$ is a block in both $\widehat{G}$ and $G^*$. 

For $n\geq 4$, let $B=K_n$ be on vertices $w_1,w_2,\ldots,w_n.$  Since  $G^*$ has exactly one cut vertex, without loss of generality, let $w_1$ be the cut vertex in $G^*$. Similarly, using the fact that  $\widehat{G}$ has two or more cut vertices, let $w_1$ and $w_2$ be two cut vertices of $\widehat{G}$. Moreover,   in $\widehat{G}$, $|B \cap \mathcal{D}|=2$ implies that at least two of the vertices in $B$ are not cut vertices, and without loss of generality, let $w_n$ be one such vertex. Then, $$N_{\widehat{G}}(w_n) =\{w_1,w_2,\ldots w_{n-1}\} \mbox{ and } N_{\widehat{G}}{(w_2)\setminus \{w_1,w_3,\ldots,w_n\}}\neq \emptyset.$$

Let $\mathbf{A}^*$ be the adjacency matrix of $G^*$ and $(\rho(G^*),\mathbf{y})$ be an eigen-pair corresponding to the spectral radius $\rho(G^*)$. Since $w_1$ is the only cut vertex in $G^*$,  $B=K_n$ is a pendant block of $G^*$ via the cut vertex $w_1$.  Therefore, by Lemma~\ref{lem:comp}, we have 
\begin{equation}\label{eqn:p3_3}
y_{w_2}=y_{w_3}=\cdots=y_{w_n}.
\end{equation}

Let $\widehat{\mathbf{A}}$ be the adjacency matrix of $\widehat{G}$. We claim that $(\rho(\widehat{G}),\mathbf{y})$ is not an eigen-pair corresponding to the spectral radius $\rho(\widehat{G})$. Suppose on the contrary, let us assume $\widehat{\mathbf{A}}\mathbf{y}=\rho(\widehat{G})\mathbf{y}.$ Thus, for every vertex $w$ in $\widehat{G}$, we have
$$(\widehat{\mathbf{A}}\mathbf{y})_w=\rho(\widehat{G})y_w.$$
In particular, for $w=w_n$, we have
\begin{equation}\label{eqn:p3_4}
 \rho(\widehat{G})y_{w_n}= \sum_{i=1\atop i\neq n}^n y_{w_i},
\end{equation}
and for $w=w_2$, we have
\begin{align}\label{eqn:p3_5}
 \rho(\widehat{G})y_{w_2}=& \sum_{v\sim w_2} y_v \nonumber\\ 
 =&\sum_{i=1\atop i\neq 2}^n y_{w_i} +  \sum_{v \in N_{\widehat{G}}{(w_2)\setminus \{w_1,w_3,\ldots,w_n\}}} y_v. 
\end{align}
From Eqn.~\eqref{eqn:p3_3}, we have $y_{w_2}=y_{w_n}$ and hence subtracting Eqn.~\eqref{eqn:p3_4} from Eqn.~\eqref{eqn:p3_5}, we get 
\begin{equation}\label{eqn:p3_6}
 \sum_{v \in N_{\widehat{G}}{(w_2)\setminus \{w_1,w_3,\ldots,w_n\}}} y_v = 0.
\end{equation}
Since $N_{\widehat{G}}{(w_2)\setminus \{w_1,w_3,\ldots,w_n\}}$ is non-empty and $\mathbf{y}$ is a Perron vector, we have
\begin{equation*}\label{eqn:p3_7}
 \sum_{v \in N_{\widehat{G}}{(w_2)\setminus \{w_1,w_3,\ldots,w_n\}}} y_v  > 0,
\end{equation*}
 which is a contradiction to Eqn.~\eqref{eqn:p3_6}. Thus, $\mathbf{y}$ is not an eigenvector  corresponding to $\rho(\widehat{G})$. Therefore, Lemma~\ref{lem:cut-sift} implies that the inequality in Eqn.~\eqref{eqn:p3_2} is strict, {\emph i.e.,} $\rho(\widehat{G}) < \rho(G^*).$ This leads to a  contradiction to our hypothesis that $\widehat{G}$ is a maximal graph.
 
The above cases leads to contradiction due to our assumption that $\widehat{G}$ has more than one cut vertex. This completes the proof.
\end{proof}

From the above result, all the blocks of a graph with maximal spectral radius in $\mathbf{Bl}(\textbf{k},\varphi)$ are adjacent to each other via a central cut vertex. Therefore, in view of part $(i)$ of Proposition~\ref{prop:2}, the maximal graph does not contain two blocks of size greater than equal to $4$, and is presented as a corollary.

\begin{cor}\label{cor:2}
Let $\widehat{G}$ be a graph with maximal spectral radius in $\mathbf{Bl}(\textbf{k},\varphi)$. Then, $\widehat{G}$ has at most one block $K_n$, where $n\geq 4.$
\end{cor}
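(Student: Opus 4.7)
The plan is to combine the two structural results already established and close with a one-line contradiction. First I would dispose of the boundary case $\varphi = 2$ separately, since Proposition~\ref{prop:3} was proved under the hypothesis $\varphi > 2$. From the bullet-point observations preceding Proposition~\ref{prop:1}, $\varphi(G) = 2$ forces $G$ to be either some $K_n$ ($n \geq 2$) or a path of length $2$; in both cases $\widehat{G}$ has at most one block of size $\geq 4$ (in fact none in the path case), so the claim is immediate.

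For $\varphi > 2$, I would invoke Proposition~\ref{prop:3} to conclude that $\widehat{G}$ has a central cut vertex, and hence that any two blocks of $\widehat{G}$ meet at this vertex and are therefore adjacent. Suppose for contradiction that $\widehat{G}$ contains two distinct blocks $K_m$ and $K_n$ with $m,n \geq 4$. Then $K_m$ and $K_n$ are adjacent, which directly contradicts part $(i)$ of Proposition~\ref{prop:2}, stating that in a maximal graph no two adjacent blocks can both have size at least $4$. Hence $\widehat{G}$ admits at most one block $K_n$ with $n \geq 4$.

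There is no real obstacle here: the corollary is essentially a bookkeeping statement that stitches together Propositions~\ref{prop:2}$(i)$ (no two adjacent large blocks) and~\ref{prop:3} (central cut vertex, so all blocks pairwise adjacent). The only care required is to note the scope of Proposition~\ref{prop:3} and to deal with $\varphi = 2$ by inspection, as done above.
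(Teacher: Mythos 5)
Your proof is correct and takes essentially the same route as the paper, which also obtains the corollary by combining Proposition~\ref{prop:3} (a central cut vertex makes all blocks pairwise adjacent) with part $(i)$ of Proposition~\ref{prop:2}. Your explicit handling of the boundary case $\varphi = 2$ is a small point of extra care that the paper glosses over, but it does not alter the argument.
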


\subsection{Existence and Uniqueness of  a Graph with Maximal Spectral Radius in $\mathbf{Bl}(\textbf{k}, \varphi)$ }

With the above necessary tools and properties of a graph with maximal spectral radius in $\mathbf{Bl}(\textbf{k}, \varphi)$, we are now ready to prove the existence and uniqueness of a graph with maximal spectral radius in $\mathbf{Bl}(\textbf{k}, \varphi)$.

\begin{theorem}\label{thm:mainthm}
A block graph $G$  is a graph with maximal spectral radius in $\mathbf{Bl}(\textbf{k}, \varphi)$ if and only if $G \cong \mathbb{B}_{\textbf{k},\varphi}.$ 
\end{theorem}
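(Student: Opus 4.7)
The plan is to combine the preceding structural results to uniquely pin down the block decomposition of any graph with maximal spectral radius in $\mathbf{Bl}(\textbf{k}, \varphi)$. Since $\mathbf{Bl}(\textbf{k}, \varphi)$ is finite and nonempty (it contains $\mathbb{B}_{\textbf{k}, \varphi}$), a maximum spectral radius graph exists, so once I show that every such graph is isomorphic to $\mathbb{B}_{\textbf{k}, \varphi}$, the biconditional follows. I first dispose of the degenerate case $\varphi = 2$: by the earlier observation, any $G \in \mathbf{Bl}(\textbf{k}, 2)$ is either $K_{\textbf{k}}$ or a path of length $2$ (forcing $\textbf{k} = 3$), and since $\rho(P_3) = \sqrt{2} < 2 = \rho(K_3)$, the maximum is attained uniquely by $K_{\textbf{k}} = \mathbb{B}_{\textbf{k}, 2}$.

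For $\varphi > 2$, let $\widehat{G}$ be a graph with maximal spectral radius in $\mathbf{Bl}(\textbf{k}, \varphi)$ and fix a maximum dissociation set $\mathcal{D}$. Proposition~\ref{prop:3} produces a central cut vertex $c$, and Corollary~\ref{cor:2} forces at most one block of size at least $4$; every remaining block is a $K_2$ or $K_3$, and since all blocks share $c$ they are pairwise adjacent. Proposition~\ref{prop:1}(ii) yields $c \notin \mathcal{D}$. A short extendability argument then shows that for every block $B$ of $\widehat{G}$ the set $\mathcal{D}$ meets $B \setminus \{c\}$ as heavily as the dissociation constraint allows, namely $|K_2 \cap \mathcal{D}| = 1$, $|K_3 \cap \mathcal{D}| = 2$, and $|K_n \cap \mathcal{D}| = 2$ for the (possibly present) block $K_n$ with $n \geq 4$. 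Proposition~\ref{prop:1}(iii) then rules out having two $K_2$ blocks, because both would contribute exactly one vertex to $\mathcal{D}$ while being adjacent via $c$.

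Writing $a \in \{0, 1\}$ for the number of $K_2$ blocks, $b$ for the number of $K_3$ blocks, and $n \geq 4$ (respectively $n = 0$) for the size of the unique large block when present (respectively absent), the vertex count $\textbf{k} = 1 + a + 2b + \max\{0, n - 1\}$ and the dissociation count $\varphi = a + 2b + 2$ (or $\varphi = a + 2b$ in the boundary case $n = 0$, which forces $\textbf{k} = \varphi + 1$), together with the parity of $\varphi$ and the constraint $a \leq 1$, uniquely determine the triple $(a, b, n)$: one gets $a = 1$ when $\varphi$ is odd and $a = 0$ when $\varphi$ is even, with $b$ and $n$ then read off. A direct comparison with the definition of $\mathbb{B}_{\textbf{k}, \varphi}$ yields $\widehat{G} \cong \mathbb{B}_{\textbf{k}, \varphi}$ in each parity case, including the boundary situation $\textbf{k} = \varphi + 1$ (in which the block $K_{\textbf{k} - \varphi + 2}$ in the definition happens to be $K_3$, merging into the $K_3$ count). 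The main obstacle I anticipate is justifying precisely how $\mathcal{D}$ meets each block; once the values $|K_2 \cap \mathcal{D}|$, $|K_3 \cap \mathcal{D}|$, $|K_n \cap \mathcal{D}|$ are locked in by the extendability and the hypotheses of Proposition~\ref{prop:1}, the remaining arithmetic and parity matching are routine.
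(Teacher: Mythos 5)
Your proposal is correct and follows essentially the same route as the paper: Proposition~\ref{prop:3} and Corollary~\ref{cor:2} reduce to a central cut vertex with at most one block of size $\geq 4$, the intersection sizes $|B\cap\mathcal{D}|$ are pinned down exactly as in the paper's Eqn.~\eqref{eqn:size}, and parity plus vertex counting then determine the block multiset. The only cosmetic difference is that you exclude a second $K_2$ block by invoking Proposition~\ref{prop:1}$(iii)$ directly, whereas the paper explicitly re-runs the underlying merging operation (pairing $K_2$'s into $K_3$'s and applying Lemma~\ref{lem:sr_edge}); both are valid and lead to the same arithmetic.
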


\begin{proof}
For  $\varphi=2$, if $\textbf{k}\geq 2$ and $\textbf{k}\neq 3$, then  $\mathbf{Bl}(\textbf{k}, \varphi)$  consist of only $K_{\textbf{k}}$, and if $\textbf{k}=3$, then $\mathbf{Bl}(\textbf{k}, \varphi)$ consist of $K_3$ or a path of length $2$. Therefore, if $\varphi=2$, then the maximal spectral radius in $\mathbf{Bl}(\textbf{k}, \varphi)$ uniquely attend by $K_{\textbf{k}} \cong \mathbb{B}_{\textbf{k},2}$. 


For $\varphi >2$, let $\widehat{G}$ be a graph with maximal spectral radius in $\mathbf{Bl}(\textbf{k}, \varphi)$ and $\mathcal{D}$ be a maximal dissociation set of $\widehat{G}$. Using Proposition~\ref{prop:3},  $\widehat{G}$ has exactly one cut vertex. Moreover, by part $(ii)$ of Proposition~\ref{prop:1}, the unique cut vertex of $\widehat{G}$ is not in $\mathcal{D}$. Therefore, if $B$ is a block of $\widehat{G}$, then 
\begin{align}\label{eqn:size}
		|B\cap \mathcal{D}| =
		\begin{cases}
			1 & \text{ if } B=K_2,\\
			\\
			2  & \text{ if } B=K_n \mbox{ where } n\geq 3.
		\end{cases}
	\end{align}
We now consider the following two cases to complete the proof.\\

\noindent {\bf \underline{Case $1$}:} $\varphi$ is even.\\

In this case, we claim that $K_2$ is not a block of $\widehat{G}$.  Otherwise, using  Eqn.~\eqref{eqn:size} and $\varphi$ is even, there are even number $2t$ (say) of  $K_2$ blocks in $\widehat{G}$. Let $G^*$ be a block graph obtained from $\widehat{G}$ by adding additional edges on vertices $2t$ blocks  $K_2$  such that $G^*$ have $t$ blocks of $K_3$ instead of $2t$  blocks of  $K_2$. Thus, by construction $\mathcal{D}$ is also  a maximal dissociation set of $G^*$ and hence $G^*\in \mathbf{Bl}(\textbf{k}, \varphi)$. Further,  by Lemma~\ref{lem:sr_edge},  we have $\rho(\widehat{G}) < \rho(G^*)$. Which is a contradiction.

Let $\varphi=2s$  for some $s$. Using $K_2$ is not a block of $\widehat{G}$ and Eqn.~\eqref{eqn:size}, we have $\widehat{G}$ has exactly $s$ blocks. By Corollary~\ref{cor:2}, $\widehat{G}$ has at most one block  $K_n$, where $n\geq 4$ which implies that $\widehat{G}$ has at least $s-1$ blocks of $K_3$. Thus, the partition of $\textbf{k}$ vertices of $\widehat{G}$ into cut vertices and non-cut vertices from different blocks is given by $\textbf{k}= 1 + 2(s-1)+ (\textbf{k}-2s+1),$
\emph{i.e.,} $\widehat{G}$ has a central cut vertex, $s-1$ blocks of $K_3$ and one block of $K_{\textbf{k}-2s+2}$. Hence  $\widehat{G}\cong K_3^{(\frac{\varphi-2}{2})}\circledcirc K_{\textbf{k}-\varphi+2}. $\\

\noindent {\bf \underline{Case $2$}:} $\varphi$ is odd.\\

In this case, we claim that  $\widehat{G}$ has exactly one block as $K_2$.  Otherwise, using  Eqn.~\eqref{eqn:size} and $\varphi$ is odd, there are odd number $2t+1$ (say) of  $K_2$ blocks in $\widehat{G}$. Let $G^*$ be a block graph obtained from $\widehat{G}$ by adding additional edges on vertices $2t+1$ blocks  $K_2$  such that $G^*$ have $t$ blocks of $K_3$ and one block of $K_2$ instead of $2t+1$  blocks of  $K_2$. Thus,  arguments similar to Case~$1$ lead to a contradiction.

Let $\varphi=2s+1$  for some $s$. Since $\widehat{G}$ has exactly one block as $K_2$, using  Eqn.~\eqref{eqn:size}  we conclude that $\widehat{G}$ has exactly $s+1$ blocks. Using arguments similar to Case~$1$, the partition of $\textbf{k}$ vertices of $\widehat{G}$ into cut vertices and non cut vertices from different blocks is given by  $\textbf{k}= 1 + 1+ 2(s-1)+ (\textbf{k}-2s),$ \emph{i.e.,} $\widehat{G}$ has a central cut vertex, exactly one block as  $K_2$, $s-1$ blocks of $K_3$ and one block of $K_{\textbf{k}-2s+2}$. Hence  $\widehat{G}\cong K_2 \circledcirc K_3^{(\frac{\varphi-3}{2})}\circledcirc K_{\textbf{k}-\varphi+2}. $  \end{proof}

In the next section, we consider different cases to obtain possible bounds on the spectral radius $\rho(\mathbb{B}_{\textbf{k},\varphi})$ for the extremal graph $\mathbb{B}_{\textbf{k},\varphi}$ in  $\mathbf{Bl}(\textbf{k}, \varphi)$.

\section{Bounds on $\rho(\mathbb{B}_{\textbf{k},\varphi})$}\label{sec:bounds}

In this section, we provide bounds for the spectral radius $\rho(\mathbb{B}_{\textbf{k},\varphi})$ for the extremal graph $\mathbb{B}_{\textbf{k},\varphi}$. From the definition of $\mathbb{B}_{\textbf{k},\varphi}$, we know that $\varphi=2$ if and only if $\mathbb{B}_{\textbf{k},\varphi} \cong K_{\textbf{k}}$. By Lemma~\ref{lem:roots}, $\varphi=2$ if and only if $\rho(\mathbb{B}_{\textbf{k},\varphi})=\textbf{k}-1.$ Therefore, we provides bound $\rho(\mathbb{B}_{\textbf{k},\varphi})$ whenever $2<\varphi \leq \textbf{k}-1.$ 

We obtained the upper bound for  $\rho(\mathbb{B}_{\textbf{k},\varphi})$  using a simple observation and is presented below as a lemma without proof.

\begin{lem}\label{lem:inteq}
 if $\theta =\omega + \delta$ is a real root of a polynomial $h(x)$, then $\delta$ is root of $h(\omega +x)$. Furthermore, if $p(\delta)<0$, where $p(x)=ax^2+bx+c$ be real quadratic polynomial with $a>0$ and $c<0$, then 
 $$\delta < \dfrac{-b + \sqrt{b^2-4ac}}{2a},  \mbox{ and hence }  \theta < \omega +   \dfrac{-b + \sqrt{b^2-4ac}}{2a}.$$
\end{lem}

We begin with the following observation:  It is easy to see that $K_{\textbf{k}-\varphi +2}$ and $K_{1,\textbf{k}-1}$ are proper subgraphs of $\mathbb{B}_{\textbf{k},\varphi}$. Using  $\rho(K_{\textbf{k}-\varphi +2})= \textbf{k}-\varphi +1 $, $\rho(K_{1,\textbf{k}-1})= \sqrt{\textbf{k}-1} $ and   Lemma~\ref{lem:sr_edge}, we get
\begin{equation}\label{eqn:alpha_beta1}
\rho(\mathbb{B}_{\textbf{k},\varphi})\geq \max \big\{ \textbf{k}-\varphi +1, \sqrt{\textbf{k}-1}\big\}.
\end{equation}
Equality in Eqn.~\eqref{eqn:alpha_beta1} attains if and only if $\varphi=2.$

Let  $\alpha = \textbf{k}-\varphi+1$ and $\beta=\sqrt{\textbf{k}- 1} $. We states  a few immediate observations: $(1)$ $\alpha$ is an integer and $2\leq \alpha \leq \textbf{k}-1,$ $(2)$ $\varphi >2$ implies that $\textbf{k} >2$ and hence $\beta^2 > \beta.$ Before proving our main result of this section, we now define a few constants depending on $\textbf{k}$ and $\varphi$ that provide us with the upper and lower bounds for $\rho(\mathbb{B}_{\textbf{k},\varphi})$.

\begin{defn}\label{defn:upper_bound}
Let  $\alpha = \textbf{k}-\varphi+1$,  $\beta=\sqrt{\textbf{k}- 1} $ and 
$$\begin{array}{l l l}
			a_{1}= 2 \alpha,       & b_{1}=\alpha^2 +\alpha -\textbf{k}, & c_{1}= \alpha   -\textbf{k} +1, \\
			\\
			a_{2}=3\alpha^2 +\alpha -\textbf{k}, & b_{2}=\alpha^3 +\alpha^2-(\alpha+1)\beta^2+1, & c_{2}=\alpha^2 -\alpha \beta^2 +1, \\
			\\
			a_{3}=3\beta -\alpha, & b_{3}=2\beta(\beta-\alpha)+ (\alpha -1), & c_{3}=(\alpha -\beta -1 )\beta + (2-\alpha)\alpha, \\
			\\
			a_{4}=(5\beta-3\alpha)\beta+\alpha-1, & b_{4}=(\alpha-\beta-1)\beta & c_{4}=(\alpha-\beta-1)\beta^2\\
			                                      & \qquad +(2-\alpha)\alpha +1,  & \qquad + ((2-\alpha)\alpha +1)\beta +(1-\alpha).
\end{array}$$
For $2 < \varphi \leq  \textbf{k}-\sqrt{k-1}+1$,  let
\begin{align*}
 U_{\textbf{k}, \varphi}=
  \begin{cases}
 \alpha +\dfrac{-b_{1} + \sqrt{b_{1}^2 - 4a_{1}c_{1}} }{2a_{1}}, & \textup{ if } \varphi \textup{ is even,}\\ 
 \\
   \alpha +\dfrac{-b_{2} + \sqrt{b_{2}^2 - 4a_{2}c_{2}} }{2a_{2}}, &  \textup{ if }  \varphi  \textup{ is odd. }
  \end{cases}
  \end{align*}
For $ \textbf{k}-\sqrt{k-1}+1 < \varphi \leq  \textbf{k}-1$,  let
  \begin{align*}
 U_{\textbf{k}, \varphi}=
  \begin{cases}
 \beta +\dfrac{-b_{3} + \sqrt{b_{3}^2 - 4a_{3}c_{3}} }{2a_{3}}, & \textup{ if } \varphi \textup{ is even,}\\ 
 \\
  \beta +\dfrac{-b_{4} + \sqrt{b_{4}^2 - 4a_{4}c_{4}} }{2a_{4}}, &  \textup{ if }  \varphi  \textup{ is odd. }
  \end{cases}
  \end{align*}
\end{defn}

\begin{defn}\label{defn:lower_bound}
Let
  \begin{align*}
 L_{\textbf{k}, \varphi}=
  \begin{cases}
  \dfrac{1+\sqrt{1+4\varphi}}{2}, & \textup{ if } \varphi \textup{ is even,}\\ 
 \\
  \dfrac{1+\sqrt{1+4(\varphi-1)}}{2}, &  \textup{ if }  \varphi  \textup{ is odd. }
  \end{cases}
  \end{align*}

\end{defn}

\begin{theorem}
For $\varphi >2$, let $\rho(\mathbb{B}_{\textbf{k},\varphi})$ be the spectral radius of $\mathbb{B}_{\textbf{k},\varphi}$. Then,
$$ L_{\textbf{k}, \varphi} \leq \rho(\mathbb{B}_{\textbf{k},\varphi}) <  U_{\textbf{k}, \varphi}. $$
Moreover, for the lower bound equality attains if and only if $\varphi=\textbf{k}-1.$
\end{theorem}

\begin{proof}
We first proceed with the proof for the lower bound. If $\varphi$ is even, then the friendship graph $F_{\varphi+1}$ is subgraph of $\mathbb{B}_{\textbf{k},\varphi}$, and it is proper subgraph unless  $\varphi = \textbf{k}-1$. Therefore, using Lemma~\ref{lem:sr_edge} and Corollary~\ref{cor:friend}, we  have $\dfrac{1+\sqrt{1+4\varphi}}{2} = \rho(F_{\varphi+1}) \leq  \rho(\mathbb{B}_{\textbf{k},\varphi}),$ and equality attains if and only if $\varphi = \textbf{k}-1$.  If $\varphi$ is odd, then $F_\varphi$ is a proper subgraph of $\mathbb{B}_{\textbf{k},\varphi}$.  By Lemma~\ref{lem:sr_edge} and Corollary~\ref{cor:friend}, we  get $\dfrac{1+\sqrt{1+4(\varphi-1)}}{2} = \rho(F_{\varphi}) \leq  \rho(\mathbb{B}_{\textbf{k},\varphi}).$ From the above arguments and Definition~\ref{defn:lower_bound}, we obtained the desired inequality for the lower bound. Next, we consider the following cases to prove the result for the upper bound.\\

\noindent {\bf \underline{Case $1$}:} $\varphi$ is even.\\

Let $\alpha = \textbf{k}-\varphi+1$ and $\beta=\sqrt{\textbf{k}- 1} $. By Eqn.~\eqref{eqn:alpha_beta1}, we have $\rho(\mathbb{B}_{\textbf{k}, \varphi}) > \max\{\alpha, \beta\}$. Using Lemma~\ref{lem:roots}, $\rho(\mathbb{B}_{\textbf{k},\varphi})$ is the largest root of the polynomial $f(x)$, where 
\begin{align*}
f(x)&=x^3-(\textbf{k}-\varphi+1)x^2-(\varphi-1)x+(\varphi-1)(\textbf{k}+\varphi)+1\\
    &=x^3 -\alpha x^2 + (\alpha - \textbf{k})x - (\alpha^2 - (\textbf{k} +1 )\alpha + \beta^2).
\end{align*}

 We now consider the subcases $\alpha \leq \beta$ and $\beta < \alpha$ separately to complete the proof.  In view of $2< \varphi \leq \textbf{k}-1$,  $\beta \leq \alpha$ is equivalent to $2 < \varphi \leq  \textbf{k}-\sqrt{k-1}+1$  and  $\alpha < \beta$ is equivalent to  $ \textbf{k}-\sqrt{k-1}+1 < \varphi \leq  \textbf{k}-1$.\\

\noindent {\bf \underline{Subcase $1.1$}:}  $2 < \varphi \leq  \textbf{k}-\sqrt{k-1}+1$ ({\it{i.e.,} } \textup{for} $\beta \leq \alpha$).\\

Let $\rho(\mathbb{B}_{\textbf{k},\varphi})= \alpha +\delta.$  Then, $\delta> 0$ and   $\delta$ is the largest  root of
\begin{equation}\label{eqn:f_alpha}
f(\alpha + x)= x^3 +2\alpha x^2 + (\alpha^2 +\alpha -\textbf{k})x +(\alpha   -\textbf{k} +1).
\end{equation}
Therefore, using $\delta> 0$, we have $2\alpha \delta^2 + (\alpha^2 +\alpha -\textbf{k})\delta +(\alpha   -\textbf{k} +1) < 0.$ Using $\alpha \geq 2$  and $\varphi >2$, we have $\alpha > 0$ and $\alpha -  \textbf{k} +1= -(\varphi -2) <0$. Therefore, by Lemma~\ref{lem:inteq} and Definition~\ref{defn:upper_bound}, we get $\rho(\mathbb{B}_{\textbf{k},\varphi})<  U_{\textbf{k}, \varphi}$.\\

\noindent {\bf \underline{Subcase $1.2$}:} $ \textbf{k}-\sqrt{k-1}+1 < \varphi \leq  \textbf{k}-1$ ({\it{i.e.,}}   \textup{for} $\alpha < \beta$).\\

Let $\rho(\mathbb{B}_{\textbf{k},\varphi})= \beta +\delta.$  Then, $\delta> 0$ and   $\delta$ is the largest  root of 
$$f(\beta + x)= x^3 + (3\beta -\alpha)x^2 + (3\beta^2-2\alpha\beta + (\alpha -\textbf{k}))x + [\beta^3 - \alpha \beta^2 + (\alpha -\textbf{k})\beta  - (\alpha^2 - (\textbf{k} +1 )\alpha + \beta^2) ].$$
Note that, $\beta^3 - \alpha \beta^2 + (\alpha -\textbf{k})\beta  - (\alpha^2 - (\textbf{k} +1 )\alpha + \beta^2)= (\beta^2 + \alpha -\textbf{k} -\beta )\beta + ( (\textbf{k}+1)-\beta^2-\alpha)\alpha = (\alpha -\beta -1 )\beta + (2-\alpha)\alpha $, and 
$3\beta^2-2\alpha\beta + (\alpha -\textbf{k})= 3\beta^2-2\alpha\beta + \alpha - \beta^2-1 =2\beta(\beta-\alpha)+ (\alpha -1)$.
Therefore, using $\delta> 0$ and  $\delta$ is the largest  root of 
\begin{equation}\label{eqn:f_beta}
f(\beta + x)=  x^3 + (3\beta -\alpha)x^2 + (2\beta(\beta-\alpha)+ (\alpha -1))x + [(\alpha -\beta -1 )\beta + (2-\alpha)\alpha],
\end{equation}
and hence
$ (3\beta -\alpha)\delta^2 + (2\beta(\beta-\alpha)+ (\alpha -1))\delta + [(\alpha -\beta -1 )\beta + (2-\alpha)\alpha] <0.$  Using  $\beta >\alpha  \geq 2$, we have  $(3\beta -\alpha)>0$ and $[(\alpha -\beta -1 )\beta + (2-\alpha)\alpha] <0.$ Hence,  using Lemma~\ref{lem:inteq} and Definition~\ref{defn:upper_bound}, the desired inequality follows.\\

\noindent {\bf \underline{Case $2$}:} $\varphi$ is odd.\\

Similar to Case~$1$, we consider  $\alpha \leq \beta$ and $\beta < \alpha$ as separate subcases. Using Lemma~\ref{lem:roots}, it is easy to see that $\rho(\mathbb{B}_{\textbf{k},\varphi})$ is the largest root of the polynomial $g(x)=x [f(x) + 1]+(1-\alpha).$\\

\noindent {\bf \underline{Subcase $2.1$}:}  $2 < \varphi \leq  \textbf{k}-\sqrt{k-1}+1$ ({\it{i.e.,} } \textup{for} $\beta \leq \alpha$).\\

Let $\rho(\mathbb{B}_{\textbf{k},\varphi})= \alpha +\delta.$  Then, $\delta> 0$ and   $\delta$ is the largest  root of $g(\alpha+x)=(\alpha+ x) [f(\alpha+ x) + 1]+(1-\alpha).$ Using Eqn.~\eqref{eqn:f_alpha}, we have
\begin{align*}
g(\alpha+x) &= x^4 +3\alpha x^3+ (3\alpha^2 +\alpha- \textbf{k})x^2+[\alpha^3 +\alpha^2-(\textbf{k}-1)(\alpha+1)+1]x +(\alpha^2 -(\textbf{k}-1)\alpha +1)\\
&= x^4 +3\alpha x^3+ (3\alpha^2 +\alpha- \textbf{k})x^2+[\alpha^3 +\alpha^2-(\alpha+1)\beta^2+1]x +(\alpha^2 -\alpha \beta^2 +1).
\end{align*}
Using $\alpha\geq 2$ and $\delta> 0$, we have $\delta^4 +3\alpha \delta^3>0$. Thus, $$(3\alpha^2 +\alpha -\textbf{k})\delta^2+[\alpha^3 +\alpha^2-(\alpha+1)\beta^2+1]\delta +(\alpha^2 -\alpha \beta^2 +1)< 0.$$ 
Now, using $\beta \leq \alpha$ and $\alpha \geq 2$, we get $\textbf{k}-1 = \beta^2 \leq \alpha^2 $  which implies that $1\leq \alpha -1 \leq \alpha^2 +\alpha -\textbf{k}.$ Hence $(3\alpha^2 +\alpha -\textbf{k}) > 0$. Further, using $\alpha$ is a  positive integer and $2\leq \alpha < \textbf{k}-1$, gives $\alpha^2 -\alpha \beta^2 = \alpha (\alpha -(\textbf{k}-1)) \leq -2.$ Hence $(\alpha^2 -\alpha \beta^2 +1)< 0.$ Therefore,  Lemma~\ref{lem:inteq} and Definition~\ref{defn:upper_bound} gives the desired bound.\\

\noindent {\bf \underline{Subcase $2.2$}:} $ \textbf{k}-\sqrt{k-1}+1 < \varphi \leq  \textbf{k}-1$ ({\it{i.e.,}}   \textup{for} $\alpha < \beta$).\\

Let $\rho(\mathbb{B}_{\textbf{k},\varphi})= \beta +\delta.$  Then, $\delta> 0$ and   $\delta$ is the largest  root of $g(\beta+x)=(\beta+ x) [f(\beta+ x) + 1]+(1-\alpha).$ Using Eqn.~\eqref{eqn:f_beta}, we have

\begin{align*}
g(\beta + x)=& x^4 + (4\beta-\alpha)x^3 + [(5\beta -3\alpha)\beta+(\alpha -1)]x^2 \\ 
             & \qquad  + [(\alpha-\beta-1)\beta+(2-\alpha)\alpha +1]x + [(\alpha-\beta-1)\beta^2+ ((2-\alpha)\alpha +1)\beta +(1-\alpha).
\end{align*}

Using $\delta> 0$ and $\beta >\alpha$, we have $\delta^4 + (4\beta-\alpha)\delta^3 >0$ and hence 
$$ [(5\beta -3\alpha)\beta+(\alpha -1)]\delta^2 
              + [(\alpha-\beta-1)\beta+(2-\alpha)\alpha +1]\delta + [(\alpha-\beta-1)\beta^2+ ((2-\alpha)\alpha +1)\beta +(1-\alpha) <0.$$
Using  $\beta > \alpha \geq 2$, we get $(5\beta -3\alpha)\beta+(\alpha -1)>0.$ Further, $\varphi >2$ implies that $\textbf{k}>2$ and hence using $\textbf{k}>2$ and  $\beta > \alpha \geq 2$, we have
$$
(\alpha-\beta-1)\beta^2+ ((2-\alpha)\alpha +1)\beta +(1-\alpha)=(\alpha-\beta)\beta^2+ (2-\alpha)\alpha\beta +(1-\beta )\beta  +(1-\alpha)<0.$$
Therefore, given  Lemma~\ref{lem:inteq} and Definition~\ref{defn:upper_bound}, the desired inequality follows. This completes the proof.
\end{proof}

\section{Conclusion}

In this manuscript, we consider the spectral radius of graphs in the class of block graphs $\mathbf{Bl}(\textbf{k}, \varphi)$ with a fixed number of vertices $\textbf{k}$ and a  given dissociation number $\varphi$. We first prove a few elementary results on the adjacency matrix of a graph, the spectral radius and a corresponding Perron vector. Using these results, we show a graph with maximal spectral radius in $\mathbf{Bl}(\textbf{k}, \varphi)$ satisfies a few necessary properties. These properties guarantee the existence and uniqueness of the block graph $\mathbb{B}_{\textbf{k},\varphi}$  that maximize the spectral radius in  $\mathbf{Bl}(\textbf{k}, \varphi)$. Finally, we obtain bounds on the spectral radius of  $\mathbb{B}_{\textbf{k},\varphi}$.\\

\noindent{ \textbf{\Large Acknowledgements}}: Joyentanuj Das is partially supported by the National Science and Technology Council in Taiwan (Grant ID: NSTC-111-2628-M-110-002).

\small{
}

\end{document}